\newcommand{\np}[1]{\hspace{-.55em}〔#1〕\hspace{-.55em}}
\numberwithin{equation}{section}
\newtheorem{theorem}{Theorem}[section]
\newtheorem{proposition}[theorem]{Proposition}
\newtheorem{lemma}[theorem]{Lemma}
\newtheorem{corollary}[theorem]{Corollary}
\newtheorem{conjecture}{Conjecture}
\theoremstyle{definition}
\newtheorem{definition}[theorem]{Definition}
\newtheorem{example}[theorem]{Example}
\newtheorem{question}{Question}
\theoremstyle{remark}
\newtheorem{remark}[theorem]{Remark}
\newcommand{\metric}[1]{\ensuremath{d_{\mathsmaller{#1}}}}
\newcommand{\metspace}[1]{\ensuremath{(#1, \metric{#1})}}
\newcommand{\topt}[1]{\ensuremath{\ms T_{\mathsmaller{#1}}}}
\newcommand{\topspace}[1]{\ensuremath{(#1, \topt{#1})}}
\title{Uniform Convergence and Knot Equivalence}
\date{January 10\textsuperscript{th}, 2021}
\author{Forest Kobayashi}
\email{fkobayashi@math.ubc.ca}
\dedicatory{Dedicated to my grandparents, Albert and Elizabeth
  Kobayashi.}
\keywords{Knots, wild knots, countable Reidemeister moves}
\begin{document}

\maketitle

\begin{abstract}
  Given a uniformly convergent sequence of ambient isotopies $\pn{\ms
    H_n}_{n\in\NN}$, bijectivity of the limit function $\ms H_\infty$
  together with a minor compactness condition guarantees that $\ms
  H_\infty$ is also an ambient isotopy. By offloading the uniform
  convergence hypothesis to a more diagrammatic condition, we obtain
  sufficient conditions for performing countably-many Reidemeister
  moves. We use this to construct examples of tame knots with
  countably-many crossings and discuss what distinguishes these from
  similar-looking wild curves.
\end{abstract}

\tableofcontents

\section{Introduction}\label{sec:introduction}
This work is based on results from the author's undergraduate thesis
\cite{KobayashiThesis}.

The goals of this document are to understand when one can apply
countable sequences of Reidemeister moves and preserve ambient isotopy
in the limit. The motivating examples are the following two curves.
\begin{figure}[H]
  \centering
  \begin{subfigure}[t]{.49\linewidth}
    \centering
    \includegraphics[width=.7\linewidth]{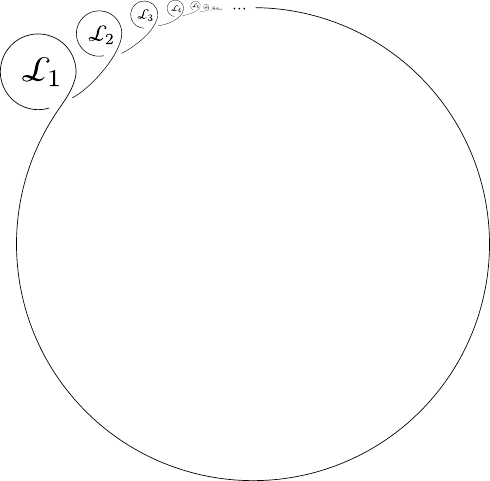}
    \caption{A wild-looking unknot.}
    \label{subfig:wild-looking-unknot}
  \end{subfigure}
  \begin{subfigure}[t]{.49\linewidth}
    \centering
    \includegraphics[width=.84\linewidth]{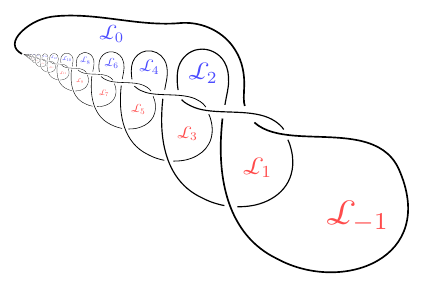}
    \caption{An unknotted-looking wild knot.}
    \label{subfig:remarkable-curve}
  \end{subfigure}
  \caption{Two scintillating curves.}
  \label{fig:scintillating-curves}
\end{figure}

\textbf{Tameness of \cref{subfig:wild-looking-unknot}.} As indicated
by the caption, the curve in \cref{subfig:wild-looking-unknot} is
tame. In fact, it is the unknot. An explicit ambient isotopy taking it
to the unknot can be constructed as follows. From time $t=0$ to
$t=\frac{1}{2}$, use a Reidemeister~I move to remove loop $\ms L_1$.
Then, from time $t=\frac{1}{2}$ to $t=\frac{3}{4}$, use a Reidemeister
I move to remove loop $\ms L_2$. So on and so forth, removing loop
$\ms L_n$ between time $t=1 - \frac{1}{2^{n-1}}$ and $t= 1 -
\frac{1}{2^n}$.

If one is careful about exactly how the Reidemeister I moves are
performed, then the result will be an ambient isotopy. We provide the
details in \cref{prop:wild-looking-unknot-tame}.

\textbf{Wildness of \cref{subfig:remarkable-curve}.} By contrast, the
curve in \cref{subfig:remarkable-curve} is wild, a result established
by Ralph Fox in \cite{Fox}. His argument uses techniques co-developed
with Emil Artin 
in \cite{Fox-Artin}, namely, a sort of ``invariant'' for tameness of
arcs. We summarize the relevant result in
\cref{sec:tameness-invariant}.

\textbf{The Problem.} At first, the wildness of the curve in
\cref{subfig:remarkable-curve} can appear very counterintuitive. As
with the loops in \cref{subfig:wild-looking-unknot}, any \emph{finite}
number of the ``stitches'' in \cref{subfig:remarkable-curve} can be
safely removed using Reidemeister II moves. The procedure is as
follows. First, use a Reidemeister II move to move $\color{red} \ms
L_{-1}$ into $\color{red} \ms L_1$. Next, use another Reidemeister II
move to remove both $\color{red} \ms L_{-1}$ and $\color{blue} \ms
L_2$.
\begin{figure}[H]
  \centering
  \begin{subfigure}[t]{.490\linewidth}
    \centering
    \includegraphics[scale=1.75]{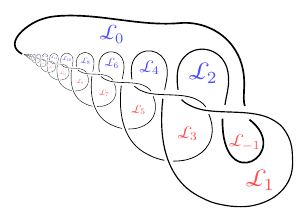}
    \caption{Moving $\color{red} \ms L_{-1}$ into $\color{red} \ms L_1$.}
  \end{subfigure}
  \begin{subfigure}[t]{.490\linewidth}
    \centering
    \includegraphics[scale=1.75]{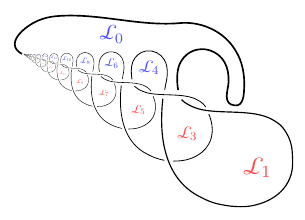}
    \caption{Removing $\color{red} \ms L_{-1}$ and $\color{blue} \ms
      L_2$.}
  \end{subfigure}
  \caption{Removing one stitch.}
\end{figure}
Note, this leaves us with a shrunk copy of the same diagram we started
with. Hence we can repeat the process any finite number of times. In
general, to perform step $n$ we use a Reidemeister II move to slide
loop $\color{red} \ms L_{2n-3}$ into loop $\color{red} \ms L_{2n-1}$,
then we remove $\color{red} \ms L_{2n - 3}$ and $\color{blue} \ms
L_{2n}$ by using another Reidemeister II move.

What's to stop us from using the same approach as in
\cref{subfig:wild-looking-unknot}, where we just performed step $n$
between $t=1-\frac{1}{2^{n-1}}$ and $t=1 - \frac{1}{2^n}$? Indeed, if
we choose our Reidemeister II moves carefully, it's possible to
guarantee continuity of the resulting limit function.

The issue is with bijectivity on the ambient space. As we show in
\cref{sec:does-not-apply}, there is no way to choose our sequence of
Reidemeister II moves without accidentally dragging points from the
ambient space down to the wild point in the limit. The proof is a
simple geometric argument but it can be easy to overlook.

\textbf{The Upshot.} To clarify the situation, in
\cref{thm:vks-ambient-homeomorphism,thm:vks-ambient-isotopy} we
generalize the strategy we described for
\cref{subfig:wild-looking-unknot} and make it rigorous.
\cref{thm:vks-ambient-homeomorphism} uses the language of
\emph{ambient homeomorphisms} while \cref{thm:vks-ambient-isotopy}
uses the language of \emph{ambient isotopies}; other than that, they
are equivalent.

The layout of the rest of the paper is as follows:
\begin{enumerate}[label=(\S\arabic*)] \setcounter{enumi}{1}
  \item In \cref{sec:background}, we go over the definitions of knots,
    ambient homeomorphisms, ambient isotopies, PL-ness, and
    tameness/wildness. It might be helpful to review these concepts
    since we will be working directly with equivalence in this paper
    instead of by proxy through Reidemeister's Theorem. We also give
    the definition of uniform convergence and recall two elementary
    results from first courses in Analysis and Topology, respectively;
    the main theorems will follow from these.
  \item In \cref{sec:uniform-convergence}, we give the definition of
    uniform convergence and use it to build up our main results
    (\cref{thm:vks-ambient-homeomorphism,thm:vks-ambient-isotopy}). In
    most cases \cref{thm:vks-ambient-homeomorphism} tends to be more
    ergonomic than \cref{thm:vks-ambient-isotopy}, but we will
    generally employ \cref{thm:vks-ambient-isotopy} because it seems
    the language of ambient isotopy is more ubiquitous than that of
    ambient homeomorphism.
  \item In \cref{sec:various-applications}, we apply
    \cref{thm:vks-ambient-isotopy} to various curves with
    countably-many crossings.
  \item In \cref{sec:does-not-apply}, we show what goes wrong if we
    try to apply \cref{thm:vks-ambient-isotopy} to two noteworthy
    examples; the second is the wild curve from
    \cref{subfig:remarkable-curve}.
\end{enumerate}

Finally, some notation.
\begin{table}[H]
  \centering
  \footnotesize
  \begin{tabular}{ccl}
    \toprule
    Symbol && Interpretation \\ \midrule
    $f$ && Embedding and/or knot (usually). \\
    \midrule
    $h$ && Homeomorphism. \\
    $\msf{h}$ && PL Homeomorphism. \\
    $\ms h$ && Homeomorphism constructed by composing other
               homeomorphisms. \\ \midrule
    $H$ && Ambient isotopy. \\
    $\msf{H}$ && PL Ambient isotopy. \\
    $\ms H$ && Ambient isotopy constructed by iteratively gluing other
               ambient isotopies. \\\midrule
    $\metspace{X}$ && Metric space $X$ with metric $\metric{X}$. \\
    $\topspace{X}$ && Topological space $X$ with topology $\ms T_{X}$.
    \\\midrule
    $\comp_{k=1}^n f_k$ && The composite function $f_n \circ f_{n-1} \circ
                           \cdots \circ f_2 \circ f_1$. \\
    $k$ && Generally reserved for an ``free'' index (e.g., $k
           \in \set{1, \ldots, n}$ above).\\
    $n$ && Generally reserved for a ``particular'' index (e.g., $n$ is
           the ``stop'' index above).\\\midrule
    $\pn{f_k}_{k=1}^\infty$ && A sequence $f_1, f_2, \ldots,
                               f_k, \ldots$. \\
    $f_k \to f$ && The $f_k$ converge to $f$ pointwise. \\
    $f_k \uconv f$ && The $f_k$ converge to $f$ uniformly. Note, some
                      authors use $\rightrightarrows$ instead of
                      $\uconv$. \\\midrule
    $A^\circ$ && Interior of $A$. \\
    $\ol{A}$ && Closure of $A$. \\
    $A^c$ && $X \setminus A$ (whenever $X$ is understood) \\
    $A_1 \setminus A_2$ && Set difference of $A_1$ and $A_2$.\\
    $A_1 \sqcup A_2$ && Disjoint union of $A_1$ and $A_2$.\\\midrule
    \np{\ldots} && Used to indicate parsing order in
                   grammatically-ambiguous sentences. \\
    $\cmark$ && Indicates the completion of a case in a proof by
                casework.\\\bottomrule
  \end{tabular}
\end{table}

\section{Background}\label{sec:background}
Today we will be working with knot equivalence directly instead of
making appeals to \emph{Reidemeister's theorem}. This is because we're
interested in knots that might be \emph{wild}
(\cref{def:tame-and-wild}), but Reidemeister's theorem assumes
\emph{tameness} (also \cref{def:tame-and-wild}). We begin with a
reminder of some fundamental definitions.

\subsection{Fundamental Definitions}\label{subsec:fundamental-definitions}

Let $\topspace{X}$, $\topspace{Y}$ be topological spaces. An
\emph{embedding} of $X$ into $Y$ is a function $f \colon X \to Y$ such
that restricting the codomain of $f$ to $f(X)$ gives us a
homeomorphism $\widetilde{f} \colon X \to f(X)$.
\begin{figure}[H]
  \centering
  \includegraphics[scale=.25]{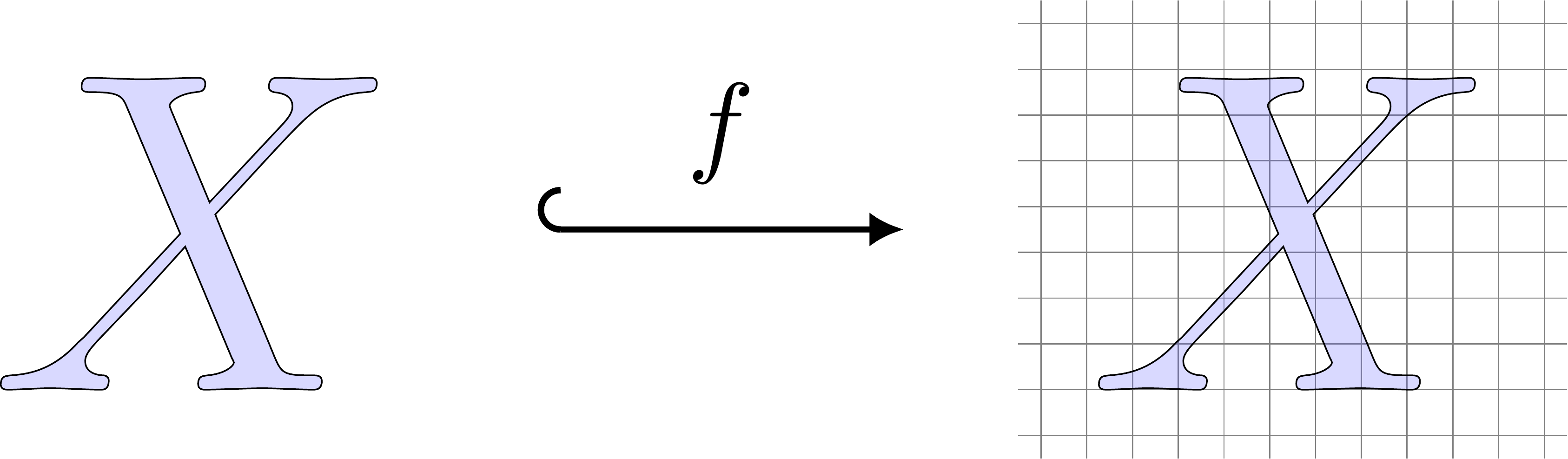}
  \caption{Example of embedding an $X$ into $\RR^2$.}
\end{figure}
Since embeddings must be injective, some authors choose to denote them
by $f \colon X \into Y$. Here we call $Y$ the \emph{ambient space} and
refer to $f(X)$ as \emph{$X$ embedded by $f$ in $Y$}.

Two embeddings $f_1$, $f_2 \colon X \to Y$ are said to be \emph{equivalent}
(denoted $f_1 \cong f_2$) if there exists a homeomorphism $h \colon Y \to
Y$ such that for all $x \in X$,
\[
  (h \circ f_1)(x) = f_2(x).
\]
Since $h$ is a homeomorphism on the \emph{ambient} space, we refer to
it as an \emph{ambient homeomorphism}.

\begin{remark}
  This definition requires pointwise equality for $(h \circ f_1)$,
  $f_2$. In general, this is stronger than requiring $h(f_1(X)) =
  f_2(X)$ as sets.\footnote{The correspondence holds for \emph{tame}
    knots and certain {everywhere-wild} knots. An example of a knot
    for which it fails is \cref{subfig:remarkable-curve}. The idea is
    that the ambient homeomorphism can only send wild points to other
    wild points, and this makes it impossible to pull the strand
    ``through'' the wild point.} The interested reader should see
  \cite{Bothe1981, Shilepsky} for more.
\end{remark}
Geometrically, we think of $h$ as ``deforming'' the ambient space to
take $f_1(X)$ to $f_2(X)$.
\begin{figure}[H]
  \centering
  \includegraphics[scale=.25]{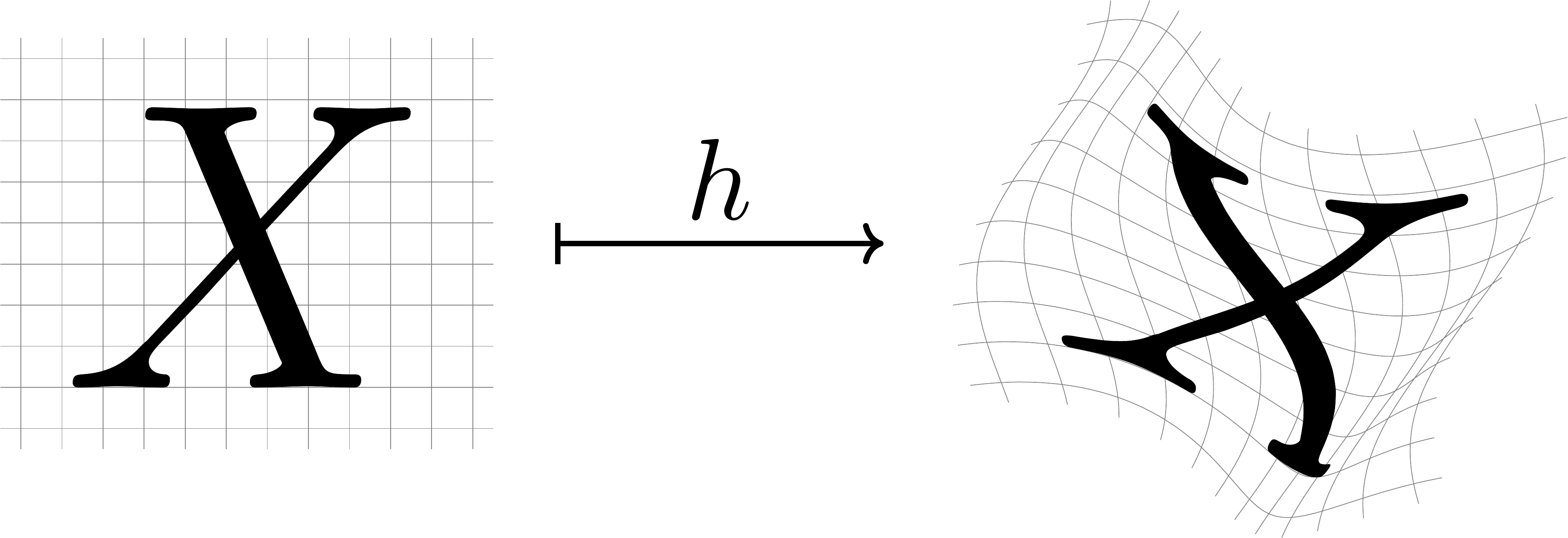}
  \caption{An example $h$ taking $f_1(X)$ to a distorted version
    representing $f_2(X)$.}
  \label{fig:example-homeomorphism}
\end{figure}
A \emph{knot} is an embedding $f \colon S^1 \into \RR^3$. One should note
that some authors take the codomain to be $S^3$ instead of $\RR^3$
because $S^3$ is compact. Our proofs today only require that $Y$ be a
metric space, hence we are free to choose either option. We could even
choose a thickened orientable surface in order to work with virtual
knots. However, we will do neither of these things and instead choose
$Y = \RR^3$ because it is easier to represent graphically.

For embeddings in $\RR^3$, defining equivalence through ambient
homeomorphisms is equivalent to defining equivalence with
\emph{ambient isotopy} (defined below). We refer to this fact as the
\emph{equivalence of equivalences}. For further discussion (as well as
a list of references about the correspondence in each of the
\textbf{PL}, $\mb{C^\infty}$, and \textbf{Topological} categories),
see \cite{KobayashiThesis}, particularly \S 6.3.
\begin{definition}[Ambient Isotopy]
  Let $\topspace{X}$, $\topspace{Y}$ be topological spaces. Let $f_1$,
  ${f_2} \colon X \into Y$ be embeddings. Then a function $H \colon
  [0,1] \times Y \to Y$ is called an \emph{ambient isotopy} iff
  \begin{enumerate}
    \item $H$ is continuous,
    \item $H(0, \cdot)$ is the identity on $Y$,
    \item For all $t \in [0,1]$, the function $H(t, \cdot) \colon Y
      \to Y$ is a homeomorphism, and
    \item For all $x \in X$, we have
      \[
      (H(1, \cdot) \circ {f_1})(x) = {f_2}(x).
      \]
  \end{enumerate}
  We often refer to $H$ as an \emph{ambient isotopy from
    ${f_1}$ to ${f_2}$}.
\end{definition}
Note that $H(1,\cdot)$ is an ambient homeomorphism from
${
  f_1}$ to ${
  f_2}$. We can think of the $t$ variable as describing a ``time''
parameter in a movie connecting $H(0,\cdot)$ to $H(1,\cdot)$.
\begin{figure}[H]
  \centering
  \includegraphics[width=\textwidth]{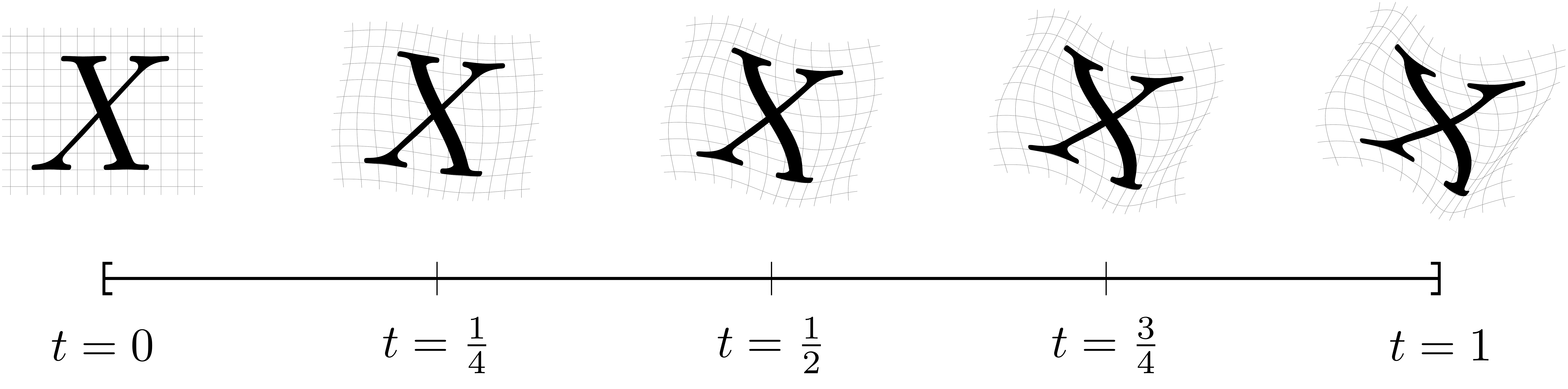}
  \caption{5 freeze-frames from an ambient isotopy where $H(1,\cdot)$
    is the $h$ in \cref{fig:example-homeomorphism}}
\end{figure}
We will prove our results today for both ambient homeomorphisms and
ambient isotopies. Although this is not strictly necessary in light of
the equivalence of equivalences, we have chosen to include both
arguments to illustrate the additional step required for working with
ambient isotopy.

\subsection{Tameness and Wildness}
Oftentimes, knot theory is restricted to the study of \emph{tame}
knots, which we define in a moment. We think of tame knots as being
well-behaved because they belong to equivalence classes of knots that
have representative elements that can described with finite
information, namely \emph{polygonal} or \emph{PL knots}.\footnote{PL
  stands for Piecewise Linear. For more on PL topology, the reader
  might look at J.L.\ Bryant's \emph{Piecewise Linear Topology}. An
  online version can be found at
  \url{https://www.maths.ed.ac.uk/~v1ranick/papers/pltop.pdf}} This is
the loose intuition underpinning Reidemeister's theorem.

\begin{definition}[Polygonal Knot]
  A \emph{polygonal knot} is a knot that is comprised of a finite
  union of straight line segments.
\end{definition}

\begin{figure}[H]
  \centering
  \includegraphics[scale=.4]{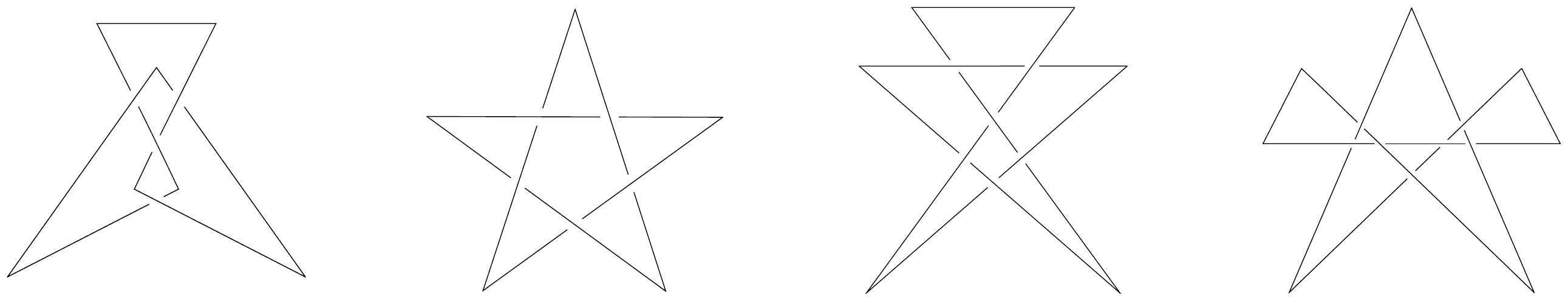}
  \caption{Examples of some polygonal knots}
\end{figure}

\begin{definition}[Tame \& Wild Knots]\label{def:tame-and-wild}
  A \emph{tame knot} is a knot that is ambient isotopic to a polygonal
  knot. A \emph{wild knot} is a knot that is not tame.
\end{definition}
\begin{figure}[H]
  \centering
  \includegraphics{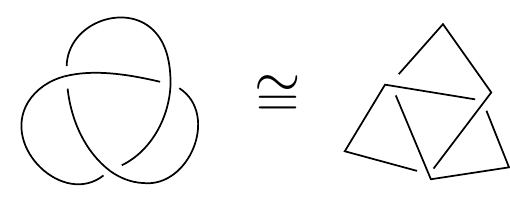}
  \caption{Example of a tame knot}
\end{figure}
\begin{remark}
  There are many other common definitions for tameness and wildness. A
  discussion of these definitions (and some of the equivalences) can
  be found in \cite{KobayashiThesis}.
\end{remark}

\subsection{Uniform Convergence}
Finally, we recall the definition of \emph{uniform convergence}.

\begin{definition}[Uniform Convergence]
  Let $\metspace{X}$, $\metspace{Y}$ be metric spaces, and consider a
  sequence of functions $f_n \colon X \to Y$. Suppose that the $f_n$
  converge pointwise to some $f \colon X \to Y$. Then we say the $f_n$
  \emph{converge to $f$ uniformly} iff for all $\varepsilon > 0$,
  there exists $n_0 \in \NN$ such that for all $x \in X$, $n > n_0$
  implies
  \[
    \metric{Y}(f_n(x), f(x)) < \varepsilon.
  \]
\end{definition}

We typically denote uniform convergence by $f_n \uconv f$.
Geometrically, we think of this in terms of pictures like the
following:
\begin{figure}[H]
  \centering
  \includegraphics[scale=.5]{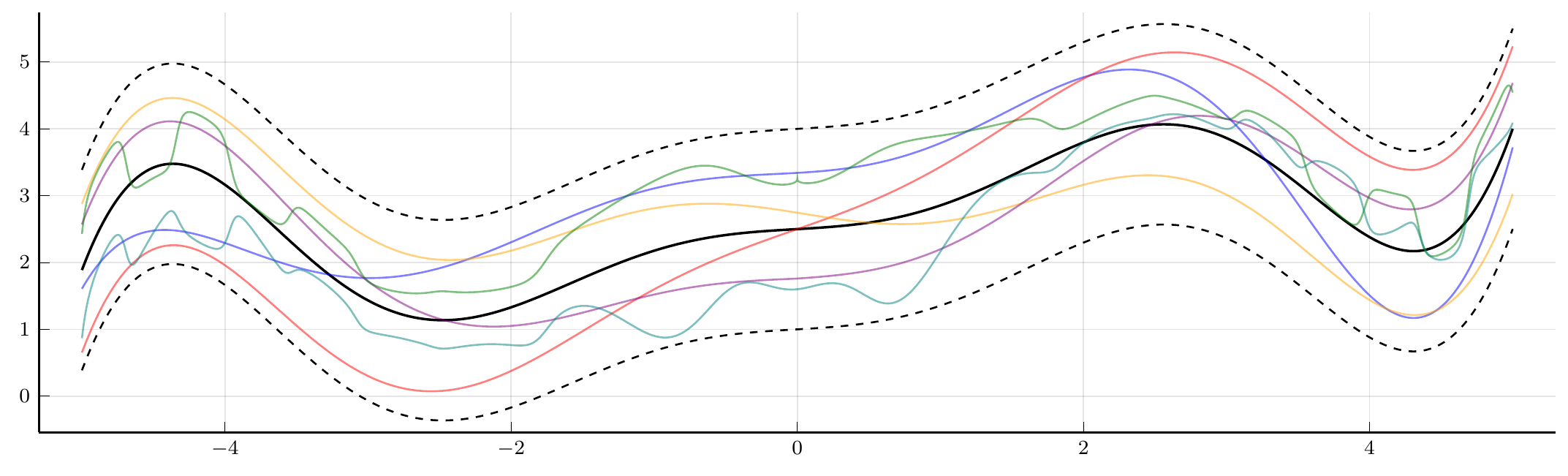}
  \caption{Example of some $f_n \colon \RR \to \RR$ satisfying $\norm{f_n -
      f}_{\infty} < \varepsilon$. Dashed lines indicate $f(x) \pm
    \varepsilon$.}
\end{figure}
\begin{adjustwidth}{1em}{}
  \begin{leftbar}\vspace{-.5em}
    \begin{remark}
      It's worth noting that, as with the definition of ambient
      homeomorphism, this definition is generally stronger than
      requiring convergence of the images as sets. In fact, we can
      construct simple examples of $f_n \colon [0,1] \into \RR^3$ such
      that for all $n$, $m\in\NN$, $f_n([0,1]) = f_m([0,1])$, but the
      $f_n$ do not even converge pointwise. One way to achieve this is
      to alternate between two different parameterizations of the same
      curve.
      \begin{figure}[H]
        \centering
        \begin{subfigure}[t]{.49\linewidth}
          \centering
          \includegraphics[scale=.4]{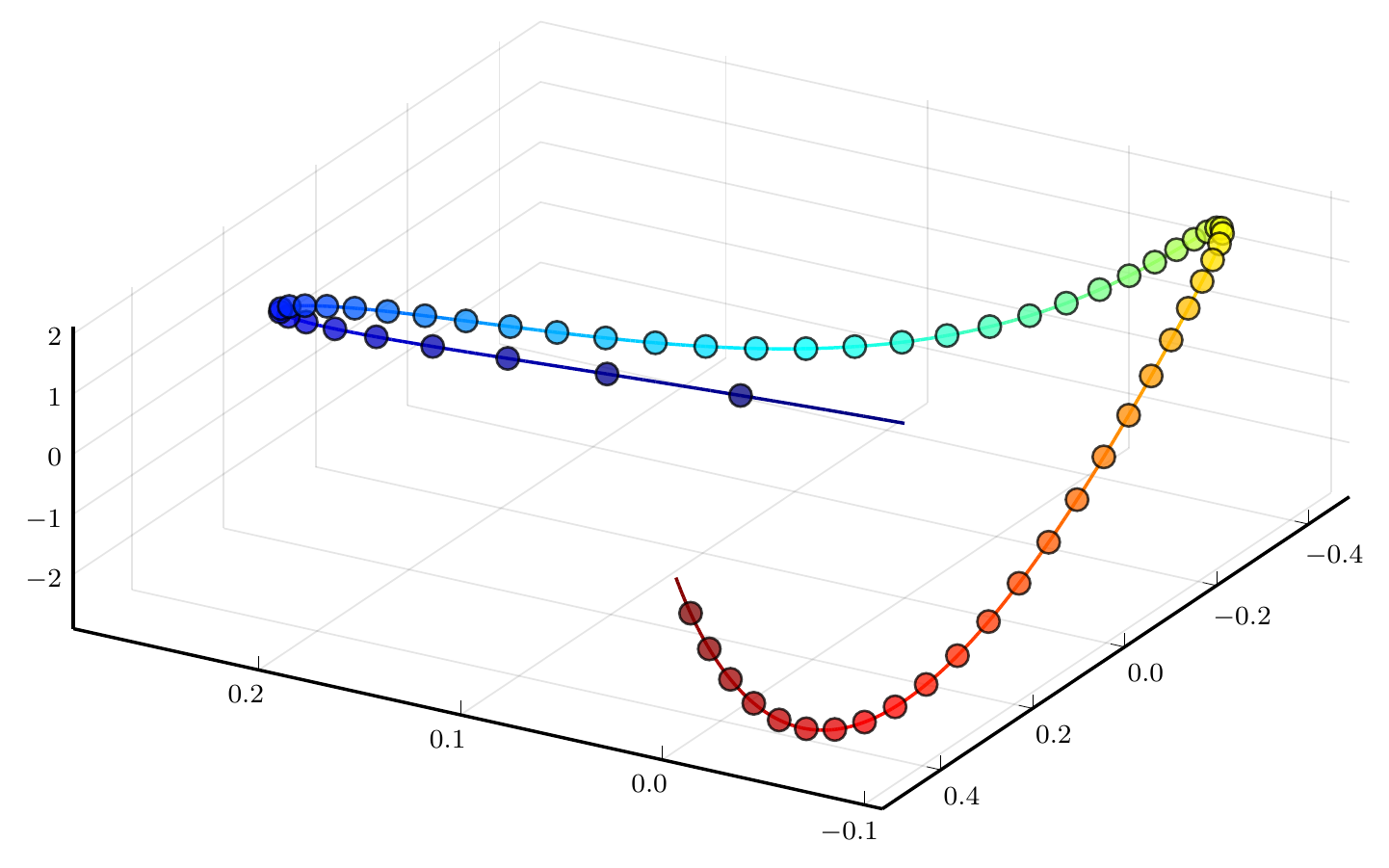}
          \caption{One parameterization\ldots}
        \end{subfigure}
        \begin{subfigure}[t]{.49\linewidth}
          \centering
          \includegraphics[scale=.4]{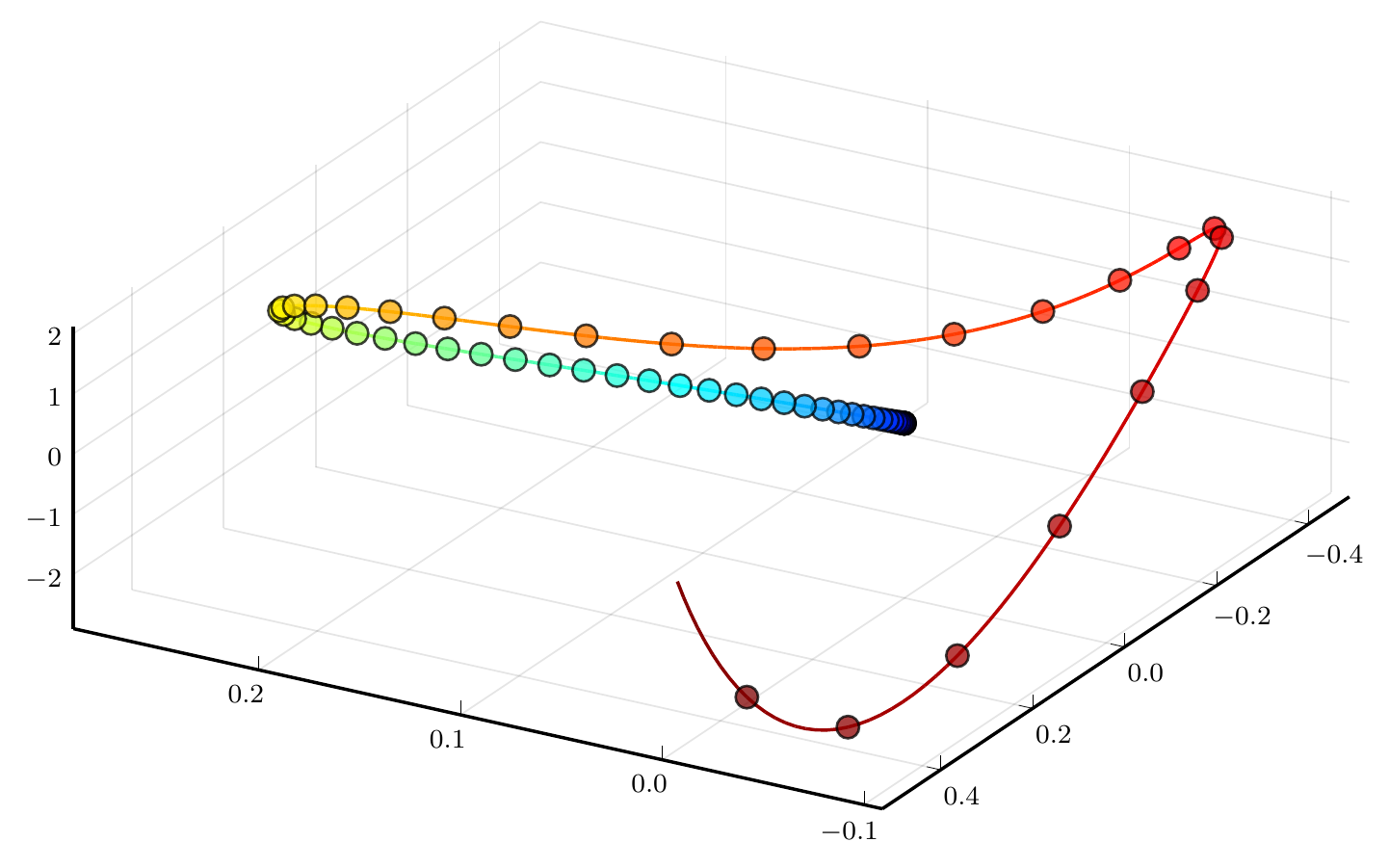}
          \caption{\ldots And another.}
        \end{subfigure}
        \caption{Two parameterizations of the same curve.}
      \end{figure}
      In the figures above, the colors correspond to value of $t \in
      [0,1]$ that yields the given point under $f_n$. In order for
      $f_n \uconv f$, not only do the curves have to take on the same
      ``shape,'' but also every point of a given color in $f_n([0,1])$
      has to be less than $\varepsilon$ away from the corresponding
      point in $f([0,1])$.
    \end{remark}
  \end{leftbar}
\end{adjustwidth}
We recall two results from first courses in Analysis and Topology,
respectively.
\begin{proposition}\label{prop:uniform-convergence-continuity}
  Let $(X, \metric{X})$, $(Y, \metric{Y})$ be metric spaces. For each
  $k \in \NN$, let $f_k \colon X \to Y$ be continuous. Suppose that
  there exists $f \colon X \to Y$ such that $f_k \uconv f$. Then $f$
  is continuous.
\end{proposition}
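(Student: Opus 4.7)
The plan is to prove continuity of $f$ pointwise at an arbitrary point $x_0 \in X$ via the standard three-epsilon argument: approximate $f$ by some $f_n$ close enough in the uniform norm, use continuity of that particular $f_n$ at $x_0$ to control the distance on the $X$ side, then glue the three estimates together with the triangle inequality in $Y$.

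In more detail, I would fix $x_0 \in X$ and $\varepsilon > 0$, and aim to produce $\delta > 0$ with $\metric{X}(x, x_0) < \delta \implies \metric{Y}(f(x), f(x_0)) < \varepsilon$. First, using $f_k \uconv f$ applied to the tolerance $\varepsilon/3$, I would pick an index $n > n_0$ such that $\metric{Y}(f_n(x), f(x)) < \varepsilon/3$ holds \emph{uniformly} in $x \in X$; in particular both $\metric{Y}(f_n(x), f(x)) < \varepsilon/3$ and $\metric{Y}(f_n(x_0), f(x_0)) < \varepsilon/3$ will be in hand simultaneously. Next, with this $n$ now fixed, I would invoke continuity of $f_n$ at $x_0$ to obtain $\delta > 0$ such that $\metric{X}(x,x_0) < \delta$ forces $\metric{Y}(f_n(x), f_n(x_0)) < \varepsilon/3$. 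Finally, whenever $\metric{X}(x, x_0) < \delta$, the triangle inequality
\[
  \metric{Y}(f(x), f(x_0)) \le \metric{Y}(f(x), f_n(x)) + \metric{Y}(f_n(x), f_n(x_0)) + \metric{Y}(f_n(x_0), f(x_0))
\]
bounds the right-hand side by $\varepsilon/3 + \varepsilon/3 + \varepsilon/3 = \varepsilon$, giving continuity at $x_0$.

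There is really no hard part here; the one point that must not be fumbled is the order of quantifiers. The index $n$ must be chosen \emph{before} $\delta$, because $\delta$ depends on which $f_n$ we use for the middle step. This is exactly where uniform (rather than merely pointwise) convergence is essential: we need the same $n$ to control $\metric{Y}(f_n(x), f(x))$ both at $x_0$ and at the unknown nearby point $x$, and uniformity delivers this with a single choice of $n$. Pointwise convergence would only yield separate indices at $x_0$ and at $x$, and those indices could not in general be reconciled.
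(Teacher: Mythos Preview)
Your proof is correct; it is the standard $\varepsilon/3$ argument, and your remark about the order of quantifiers (choosing $n$ before $\delta$, which is exactly where uniformity is used) is accurate. The paper itself does not supply a proof of this proposition: it is simply stated as a fact recalled from a first course in analysis, so there is nothing further to compare.
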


\begin{proposition}\label{prop:compact-hausdorff-homeomorphism}
  Let $\topspace{X}$, $\topspace{Y}$ be topological spaces. Suppose
  that $X$ is compact and $Y$ is Hausdorff. Now suppose that $f \colon X
  \to Y$ is bijective and continuous. Then $f$ is a homeomorphism.
\end{proposition}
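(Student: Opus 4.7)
The plan is to show that $f$ is a homeomorphism by verifying that its inverse $f^{-1}\colon Y \to X$ is continuous. Since $f$ is bijective, $f^{-1}$ is well-defined, and continuity of $f^{-1}$ is equivalent to $f$ being a closed map: for every closed $C \subseteq X$, the preimage $(f^{-1})^{-1}(C) = f(C)$ must be closed in $Y$. So the whole proposition reduces to showing that $f$ sends closed sets to closed sets.

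First I would invoke the standard fact that closed subsets of a compact space are compact. Thus if $C \subseteq X$ is closed, then $C$ is compact in $X$. Next, I would use that the continuous image of a compact set is compact, which applies because $f$ is assumed continuous; hence $f(C)$ is compact in $Y$. Finally, I would invoke that compact subsets of a Hausdorff space are closed, which uses the hypothesis that $Y$ is Hausdorff; this gives that $f(C)$ is closed in $Y$.

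Stringing these three steps together shows $f$ is a closed map, so $f^{-1}$ is continuous, and combined with the continuity and bijectivity of $f$ itself, we conclude $f$ is a homeomorphism. There is no serious obstacle here: the entire argument is an assembly of three standard point-set lemmas, each of which is usually proved in a first course in topology. If anything, the only step requiring a moment's care is the equivalence between continuity of $f^{-1}$ and the closed-map property of $f$, but this is immediate from the definition of continuity via preimages of closed sets.
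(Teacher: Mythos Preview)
Your argument is correct and is exactly the standard textbook proof: show $f$ is closed by chaining ``closed in compact $\Rightarrow$ compact $\Rightarrow$ image compact $\Rightarrow$ compact in Hausdorff $\Rightarrow$ closed,'' hence $f^{-1}$ is continuous. The paper itself does not supply a proof of this proposition at all; it simply recalls it as a standard result from a first topology course, so there is nothing to compare against beyond noting that your proof is the expected one.
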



We are now ready to begin our discussion of these results in the
context of knots.

\section{Uniform Convergence \& Knots}\label{sec:uniform-convergence}
These two propositions give us the following simple result.
\begin{corollary}\label{cor:uniformly-convergent-homeomorphism}
  Let $(X, \metric{X})$, $(Y, \metric{Y})$ be metric spaces, and
  suppose that $X$ is compact. For each $k \in \NN$, let $f_k \colon X
  \into Y$ be an embedding. Suppose that the $f_k$ converge uniformly
  to some $f \colon X \to Y$. Then if $f$ is injective, it follows
  that $f$ is an embedding.
\end{corollary}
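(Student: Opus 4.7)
The plan is straightforward: assemble the two propositions the author just recalled and check the hypotheses for each.

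First I would observe that each $f_k$, being an embedding, is in particular continuous as a map $X \to Y$. Since $f_k \uconv f$, \cref{prop:uniform-convergence-continuity} immediately gives that the limit $f \colon X \to Y$ is continuous.

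Next, to conclude that $f$ is an embedding, I need the corestriction $\widetilde{f} \colon X \to f(X)$ to be a homeomorphism. This map is surjective by construction, and it is injective by the hypothesis that $f$ is injective. It is continuous because $f$ is continuous and $f(X)$ carries the subspace topology. So the remaining task is to upgrade this continuous bijection to a homeomorphism, which is exactly what \cref{prop:compact-hausdorff-homeomorphism} is designed to do.

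To apply \cref{prop:compact-hausdorff-homeomorphism} I need $X$ compact and $f(X)$ Hausdorff. Compactness of $X$ is given. Hausdorffness of $f(X)$ follows because $Y$ is a metric space, hence Hausdorff, and being Hausdorff is hereditary to subspaces. Applying the proposition then yields that $\widetilde{f}$ is a homeomorphism, so $f$ is an embedding, finishing the proof.

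I do not expect any genuine obstacle here — the statement is essentially a packaging of the two cited propositions together with the bookkeeping that an embedding is exactly a continuous injection whose corestriction to its image is a homeomorphism. The only mildly subtle point worth emphasizing in the write-up is that injectivity of the limit $f$ cannot be omitted: uniform convergence of embeddings does not force the limit to be injective (e.g.\ the $f_k$ could converge to a constant map), which is why it appears as an explicit hypothesis.
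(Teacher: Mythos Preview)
Your proposal is correct and matches the paper's proof essentially step for step: continuity of $f$ via \cref{prop:uniform-convergence-continuity}, then Hausdorffness of $f(X)$ from the metric on $Y$ plus compactness of $X$ to invoke \cref{prop:compact-hausdorff-homeomorphism} on the corestriction. The only difference is cosmetic---you spell out the bookkeeping about the corestriction a bit more explicitly and add the closing remark about why injectivity must be assumed, neither of which changes the argument.
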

\begin{proof}
  By hypothesis, for all $k \in \NN$ we have $f_k$ is an embedding and
  hence continuous. Since we also have $f_k \uconv f$,
  \cref{prop:uniform-convergence-continuity} implies $f$ is
  continuous.

  $\metspace{Y}$ is a metric space and thus Hausdorff. So $f(X)$ with
  the subspace topology is Hausdorff. Now, $f$ is injective and thus
  $f$ is a bijection between $X$ and $f(X)$. By
  \cref{prop:compact-hausdorff-homeomorphism} it follows that $f$ is a
  homeomorphism between $X$ and $f(X)$. Thus $f$ is an embedding.
\end{proof}
Among other things, \cref{cor:uniformly-convergent-homeomorphism} can
be used to construct fractal-like knot diagrams.
\begin{example} \label{ex:koch-knot}
  Consider the knot constructed by the following iterative procedure,
  loosely inspired by the Koch Snowflake:
  \begin{figure}[H]
    \centering
    \begin{subfigure}{.32\linewidth}
      \centering
      \includegraphics[scale=.75]{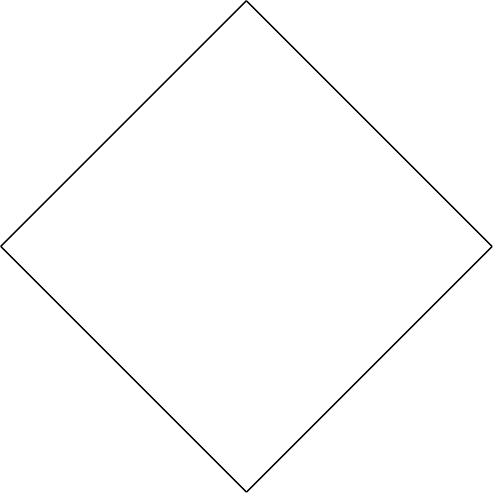}
      \caption{$f_1$}
    \end{subfigure}
    \begin{subfigure}{.32\linewidth}
      \centering
      \includegraphics[scale=.75]{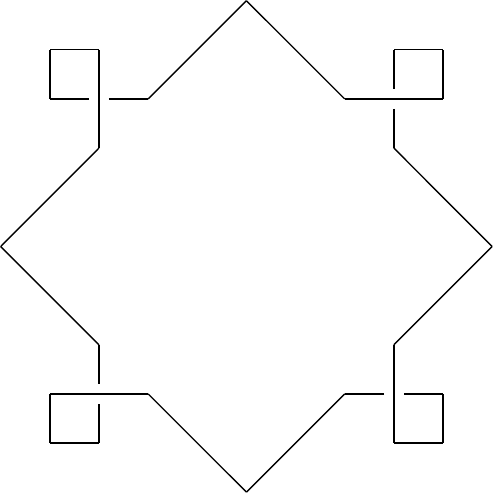}
      \caption{$f_2$}
    \end{subfigure}
    \begin{subfigure}{.32\linewidth}
      \centering
      \includegraphics[scale=.75]{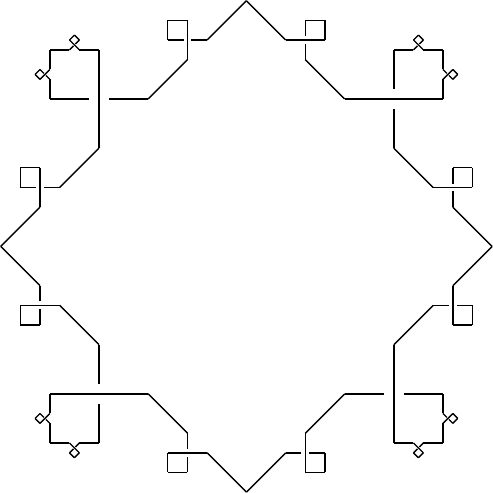}
      \caption{$f_3$}
    \end{subfigure}
  \end{figure}
  \begin{figure}[H]\ContinuedFloat
    \centering
    \begin{subfigure}{.32\linewidth}
      \centering
      \includegraphics[scale=.75]{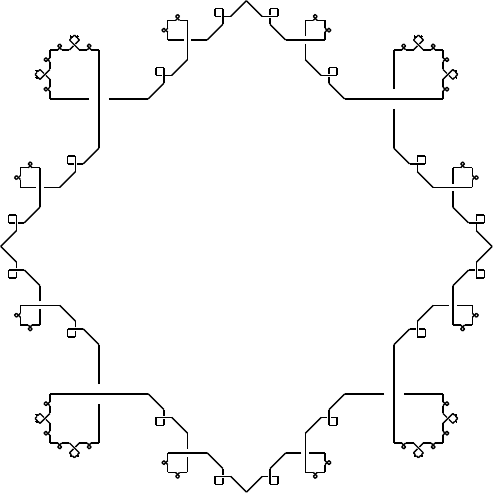}
      \caption{$f_4$}
    \end{subfigure}
    \begin{subfigure}{.32\linewidth}
      \centering
      \includegraphics[scale=.75]{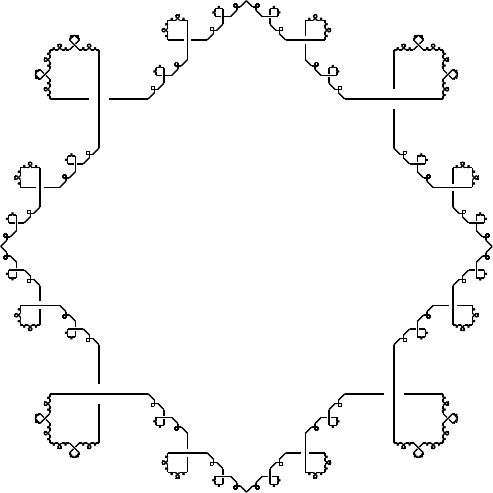}
      \caption{$f_5$}
    \end{subfigure}
    \begin{subfigure}{.32\linewidth}
      \centering
      \includegraphics[scale=.75]{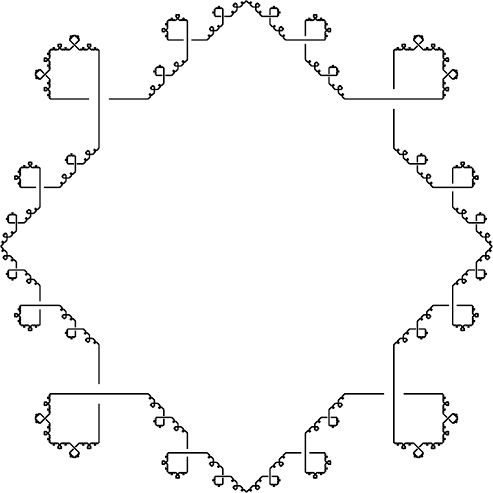}
      \caption{$f_6$}
    \end{subfigure}
    \caption{A ``snowflake'' knot.}
    \label{fig:snowflake-knot}
  \end{figure}
  One can show that with the proper choice of parameterizations for
  the $f_k$, \cref{cor:uniformly-convergent-homeomorphism} guarantees
  the limit function $f_\infty = \lim_{k\to\infty} f_k$ is an
  embedding. The main challenge is explicitly proving injectivity;
  this can be done as long as the ``shrink'' factor for the twists is
  sufficiently small.
\end{example}

\subsection{Iteratively Constructing (Ambient) Homeomorphisms}
For the rest of this document we will be primarily interested in a
special case of \cref{cor:uniformly-convergent-homeomorphism}. Namely,
when $X = Y$, the $f_k$'s become homeomorphisms from $X$ to itself.
When $X$ is a compact subset of $\RR^3$ this will give us a way to
construct ambient homeomorphisms (and later, ambient isotopies) by
composing countably-many Reidemeister moves.

To that end we repackage \cref{cor:uniformly-convergent-homeomorphism}
into a form that is more ergonomic when working with this special
case. In particular, we'll write the limit function in terms of a
\emph{composition} of homeomorphisms, which is more in line with our
intuition of applying multiple Reidemeister moves in succession. This
will also allow us to offload the uniform convergence requirement to
one of acting on a shrinking collection of neighborhoods, which is
easier to interpret in terms of knot diagrams. Note, the following
theorem is true for homeomorphisms in general, but we are only
interested in applying it to ambient homeomorphisms.

\begin{theorem}\label{thm:vks-ambient-homeomorphism}
  $(Y, \metric{Y})$ be a metric space. For all $k \in \NN$, let $h_k
  \colon Y \to Y$ be a homeomorphism, and for all $n \in \NN$, define
  \[
    \ms h_n = \comp_{k=1}^n h_k = (h_n \circ h_{n-1} \circ \cdots
    \circ h_{2} \circ h_1).
  \]
  For each $k$ let $V_k \subseteq Y$ such that $h_k$ is identity on
  $V_k^c$. Then provided
  \begin{enumerate}
    \item The $V_k$ satisfy
      \[
      \lim_{n\to\infty} \diam\pn[bigg]{\bigcup_{k=n}^\infty V_k} = 0,
      \]
      \label{cond:vk-vanish}
    \item There exists a compact $A \subseteq Y$ such that
      \[
      \pn[bigg]{\bigcup_{k=1}^\infty V_k} \subseteq A^\circ ,
      \]
      and \label{cond:vk-compactly-contained}
    \item $\ms h_\infty$ defined by
      \begin{align*}
        \ms h_\infty
        &= \lim_{n\to\infty} \ms h_n
      \end{align*}
      exists and is bijective,
  \end{enumerate}
  then $\ms h_\infty$ is a homeomorphism.
\end{theorem}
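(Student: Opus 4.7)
The plan is to reduce the statement to an application of \cref{cor:uniformly-convergent-homeomorphism} on the compact set $A$, and then paste the result together with the identity on $A^c$. Two preliminary observations make this possible. First, since $\bigcup_k V_k \subseteq A^\circ$, each $h_k$ (and hence each $\ms h_n$) is the identity outside $A^\circ$, and by pointwise convergence so is $\ms h_\infty$. Second, each $h_k$ is a homeomorphism fixing $V_k^c$ pointwise, so it must map $V_k$ bijectively onto itself; it follows that $\ms h_n(A) = A$ for every $n$, and together with bijectivity of $\ms h_\infty$ this yields $\ms h_\infty(A) = A$.

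The main technical step is to upgrade pointwise convergence of $\ms h_n$ to uniform convergence on all of $Y$. Writing $U_m = \bigcup_{k \geq m} V_k$, the key claim is that the orbit of any $y \in Y$ under the partial compositions is ``trapped'' in $U_{m+1}$ from step $m$ onward whenever it lies there at step $m$. Concretely: if $\ms h_m(y) \notin U_{m+1}$, then every subsequent $h_k$ fixes $\ms h_m(y)$, so $\ms h_n(y) = \ms h_m(y)$ for all $n \geq m$; if $\ms h_m(y) \in U_{m+1}$, an easy induction using that each $h_k$ with $k \geq m+1$ either fixes a point of $U_{m+1}$ or sends it into $V_k \subseteq U_{m+1}$ gives $\ms h_n(y) \in U_{m+1}$ for all $n \geq m$. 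Either way, $\metric{Y}(\ms h_n(y), \ms h_m(y)) \leq \diam U_{m+1}$, uniformly in $y$, and by condition~(1) this tends to $0$; passing $n \to \infty$ transfers the bound to $\ms h_\infty(y)$, so $\ms h_n \uconv \ms h_\infty$ on $Y$.

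With uniform convergence in hand, \cref{cor:uniformly-convergent-homeomorphism} applied to the restrictions $\ms h_n|_A \colon A \to Y$ (embeddings of a compact space with injective uniform limit $\ms h_\infty|_A$) shows that $\ms h_\infty|_A$ is an embedding; since its image is $A$, it is a homeomorphism $A \to A$. The map $\ms h_\infty$ is the identity on the closed set $\overline{A^c}$, and agrees with the above restriction on the intersection $A \cap \overline{A^c} = \partial A$, so the pasting lemma assembles the two into a homeomorphism of $Y$. The main obstacle is the trapping argument: it hinges on the small but essential observation that a homeomorphism fixing $V_k^c$ pointwise must permute $V_k$ setwise, and on carefully tracking which tail $U_m$ each orbit point occupies at each stage. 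The compactness hypothesis~(2) plays a comparatively minor role, providing exactly the compact domain needed to pass from ``bijective uniform limit of embeddings'' to ``homeomorphism'' via \cref{cor:uniformly-convergent-homeomorphism}.
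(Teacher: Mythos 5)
Your proof is correct and follows essentially the same route as the paper's: decompose $Y$ into $A$ and $\overline{A^c}$, note $\ms h_\infty$ is the identity outside $A^\circ$, establish uniform convergence on $A$ via the trapping argument on the tails $\bigcup_{k\geq m} V_k$, apply \cref{cor:uniformly-convergent-homeomorphism}, and glue. Your formulation is slightly more streamlined in two respects worth noting: you make the invariance $h_k(V_k) = V_k$ (and hence $\ms h_n(A) = A$, $\ms h_\infty(A) = A$) an explicit early observation, which makes the final pasting step of both $\ms h_\infty$ and its inverse cleaner, whereas the paper handles the inverse with a terse ``an identical argument shows $\ms h_\infty^{-1}$ is continuous''; and you state the uniform Cauchy-type bound $\metric{Y}(\ms h_n(y), \ms h_m(y)) \leq \diam U_{m+1}$ directly rather than going through the case split on $\ms h_{n_0}(y)$ as the paper does, though the underlying dichotomy (orbit escapes and freezes vs.\ orbit stays trapped) is the same.
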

Before continuing to the proof, we make some remarks about the
statement.
\begin{remark}
  The hypothesis that the limit $\ms h_\infty$ exists is superfluous
  as it is implied by conditions (\ref{cond:vk-vanish}) and
  (\ref{cond:vk-compactly-contained}).
\end{remark}
\begin{remark}
  One can replace the somewhat-technical conditions on the $V_k$ with
  simpler ones. E.g., conditions (\ref{cond:vk-vanish}) and
  (\ref{cond:vk-compactly-contained}) could be substituted with
  \begin{enumerate}
    \item Requiring
      \[
      \cdots \subseteq V_{k+1} \subseteq V_k \subseteq \cdots
      \subseteq V_1 \subseteq A^\circ
      \]
      and
    \item $\lim_{k\to\infty} \diam(V_k) = 0$.
  \end{enumerate}
  Another option would be to replace condition (\ref{cond:vk-vanish})
  with something like ``$\diam(\limsup V_n) = 0$.'' In any case, we
  avoided simplifications like these in an effort to make
  correspondence with the hypotheses of
  \cref{cor:uniformly-convergent-homeomorphism} more direct.
\end{remark}
\begin{remark}\label{rem:harmonic-series-BAD}
  We require $\lim_{n\to\infty} \diam\pn{\bigcup_{k=n}^\infty
    V_k} = 0$ instead of $\lim_{n\to\infty} \diam\pn{V_k} = 0$ to
  avoid situations where the $V_k$ have diameters like $\frac{1}{k}$.
  If we were to allow cases like these the divergence of the harmonic
  series would cause problems.
\end{remark}
\begin{remark}
  Though perhaps tempting, it is not sufficient to do away with the
  conditions on the $V_k$ by requiring something like ``$h_k \uconv
  1_Y$.'' As a counterexample, consider $Y = [0,1]$ with the standard
  metric on $\RR$. For all $k \in \NN$, define
  \[
    h_k = x^{(k+1)/k}.
  \]
  Then $h_k \uconv 1_{\bk{0,1}}$. But note, $\ms h_k = x^k$, and thus
  \[
    \ms h_\infty(x) =
    \begin{cases}
      0 & x \in [0, 1) \\
      1 & x = 1
    \end{cases}
  \]
  which is not a homeomorphism.
\end{remark}
\begin{proof}
  We will employ the gluing lemma. To that end, we need to partition
  $Y$ into two closed sets and show that $\ms h_\infty$ is a
  homeomorphism on both. A natural choice is to consider $\ol{A^c}$
  and $A$. Note, the compactness of the latter will allow us to appeal
  to \cref{cor:uniformly-convergent-homeomorphism}.
  \begin{figure}[H]
    \centering
    \includegraphics[width=.7\linewidth]{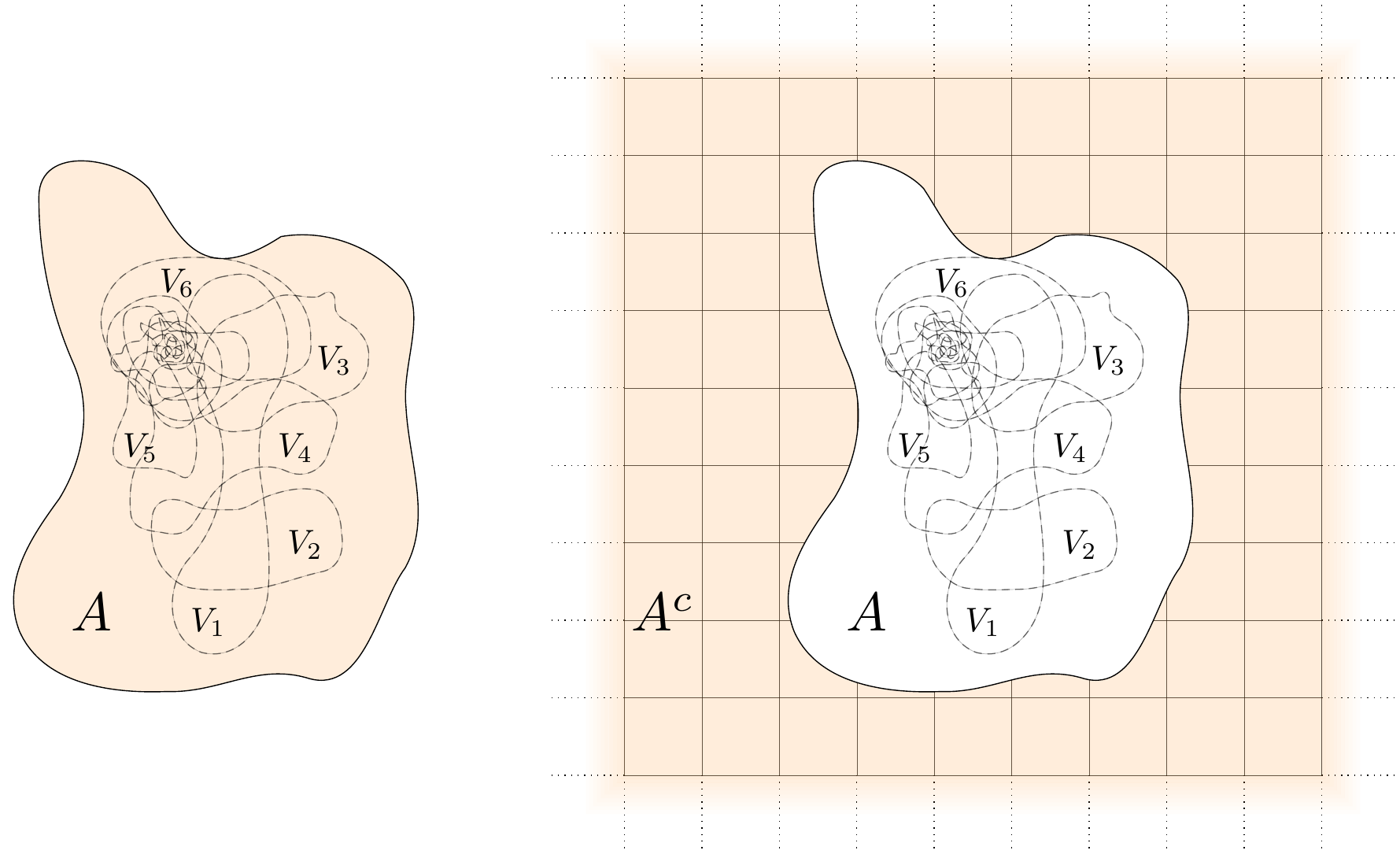}
    \caption{Example $A$ shown shaded on the left; example $\ol{A^c}$
      shown shaded on the right.}
  \end{figure}
  We examine these two sets separately.
  \begin{enumerate}
    \item (On $\ol{A^c}$): By construction, each $h_k$ is
      identity on $V_k^c$. Since each $V_k \subseteq A^\circ$, it follows
      $\ms h_\infty$ is identity (and hence a homeomorphism) on
      $(A^\circ)^c = \ol{A^c}$.
    \item (On $A$): Now, we show that $\ms h_\infty$ is a
      homeomorphism on $A$. By
      \cref{cor:uniformly-convergent-homeomorphism}, because $\ms
      h_\infty$ was assumed to be bijective it suffices to
      show that the restrictions $\ms h_k |_{A}$ converge uniformly to
      $\ms h_\infty |_{A}$. We will suppress writing the $|_{A}$
      for now because it clutters the notation too much.

      Let $\varepsilon > 0$ be given. Recall that by hypothesis, we
      have
      \[
      \lim_{n\to\infty} \diam\pn[bigg]{\bigcup_{k=n}^\infty V_k} = 0,
      \]
      hence there exists $n_0 \in \NN$ such that
      \[
      \diam\pn[bigg]{\bigcup_{k> n_0}^\infty V_k} < \varepsilon.
      \]
      We have the following claim.

      \textbf{Claim:} For all $n > n_0$, for all $y \in A$, we have
      \[
      d(\ms h_n (y), \ms h_\infty (y)) < \varepsilon.
      \]
      \textbf{Proof of Claim:} Fix an $n > n_0$ and let $y \in A$ be
      arbitrarily chosen. We have two subcases.
      \begin{enumerate}
        \item First, suppose $\ms h_{n_0}(y) \not\in
          \bigcup_{k>n_0}^\infty V_k$.

          Recall that we defined the $h_k$ such that each $h_k$ is
          identity outside $V_k$. It follows that for all $k > n_0$ we
          have $h_k(y) = y$. Hence
          \begin{align*}
            \ms h_n (y)
            &= \ms h_\infty (y),
          \end{align*}
          so
          \begin{align*}
            d(\ms h_n(y), \ms h_{\infty}(y))
            &= 0 \\
            &< \varepsilon,
          \end{align*}
          as desired. \cmark
          \begin{figure}[H]
            \centering
            \includegraphics[width=.5\linewidth]{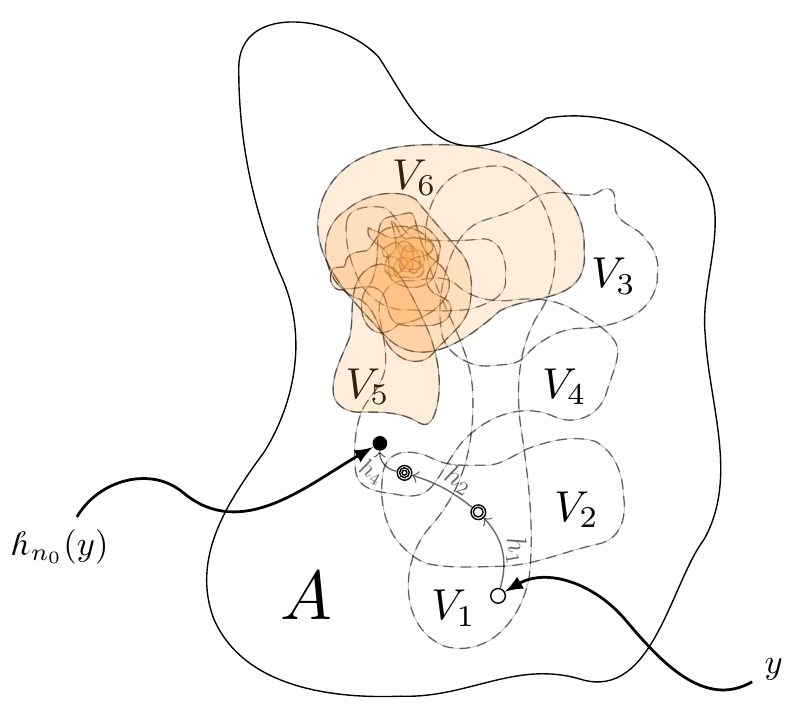}
            \caption{An example of this case with $n_0 = 4$. The
              shaded portions represent $\bigcup_{k>n_0}^\infty V_k$.
              Here, $y$ starts in $V_1$, is mapped into $V_2$ by
              $h_1$, into $V_4$ by $h_2$, skipped by $h_3$, then
              finally mapped to another point of $V_4$ by $h_4$,
              before remaining fixed for all $k > 4$. }
          \end{figure}
        \item Now, suppose $\ms h_{n_0} (y) \in
          \bigcup_{k>n_0}^\infty V_k$. Note, since \np{for all $k >
          n_0$, $h_k$ is bijective and is identity outside
          $\bigcup_{k>n_0}^\infty V_k$}, it follows that for all $n >
          n_0$,
          \[
          \ms h_{n}(y) \in\ \ \bigcup_{\mathclap{k>n_0}}^\infty\,
          V_k
          \]
          and hence
          \[
          \ms h_{\infty} (y) \in \ol{\ \
          \bigcup_{\mathclap{k>n_0}}^\infty\, V_k}.
          \]
          Note, the set in the second is just the closure of the set
          in the first; thus they have the same diameter. By
          definition of $n_0$, we have $\diam
          \pn{\bigcup_{k>n_0}^\infty V_k} < \varepsilon$, hence
          \begin{align*}
            d(\ms h_n(y), \ms h_\infty (y)) < \varepsilon
          \end{align*}
          as desired. \cmark
      \end{enumerate}
      In either case, we get that $d(\ms h_n(y),\ \ms h_\infty (y)) <
      \varepsilon$. Now (writing the restrictions explicitly again) it
      follows that $\ms h_n|_{A}$ is a sequence of homeomorphisms with
      $\pn{\ms h_n|_{A}} \uconv \pn{\ms h_\infty|_{A}}$.

      Finally, recall that by hypothesis, $\ms h_\infty$ is a
      bijection. This implies $\ms h_\infty|_{A}$ is too, and since
      $A$ is compact, \cref{cor:uniformly-convergent-homeomorphism}
      now guarantees $\ms h_\infty|_{A}$ is a homeomorphism on $A$.
  \end{enumerate}
  Now, applying the gluing lemma to $(\ms h_\infty)|_{A}$ and $(\ms
  h_\infty)|_{A^c}$ we conclude that $\ms h_\infty$ is continuous.
  An identical argument shows $\ms h_\infty^{-1}$ is continuous. It
  follows that $\ms h_\infty$ is a homeomorphism, as desired.
\end{proof}

\subsection{Iteratively Constructing (Ambient) Isotopies}
We now state the analogous result for isotopies. As with
\cref{thm:vks-ambient-homeomorphism}, the theorem below is valid for
isotopies in general but we are only interested in applying it to
ambient isotopies. It might be easy to get bogged down by the
additional details so we summarize the main ideas.

Given a sequence of isotopies $H_k$, if the associated homeomorphisms
$h_k(\cdot) \coloneqq H_k(1, \cdot)$ satisfy the hypotheses of
\cref{thm:vks-ambient-homeomorphism}, then we can stitch the $H_k$'s
together into an isotopy $\ms H_\infty$ as follows. First, define $t_0
= 0$ and let $\pn{t_k}_{k=1}^\infty$ be a strictly increasing sequence
in $\pn{0,1}$. Define $\ms H_\infty$ to apply the effects of $H_1$
over the compressed time interval $\bk{t_0, t_1}$. Then, do the same
to apply $H_2$ over $\bk{t_1, t_2}$. Continue this process, in general
applying $H_k$ over the interval $\bk{t_{k-1}, t_k}$.

Stopping the construction above after $n$ steps will give us an
isotopy $\ms H_n$. Taking $n \to \infty$ we will get a function $\ms
H_\infty$ with $\ms H_\infty(1, \cdot) = \ms h_\infty$. By
\cref{thm:vks-ambient-homeomorphism}, $\ms h_\infty$ will be a
homeomorphism. And since the $\ms H_n$ are all isotopies, we'll see
that $\ms H_\infty(t, \cdot)$ will be a homeomorphism for all $t \in
\bp{0,1}$. Applying a uniform convergence argument to the $\ms H_n$
will then show $\ms H_\infty$ is continuous overall and thus an
isotopy!
\begin{theorem}\label{thm:vks-ambient-isotopy}
  Let $\metspace{Y}$ be a metric space. For all $k \in \NN$, let $H_k
  \colon [0,1] \times Y \to Y$ be an isotopy, and let $V_k \subseteq
  Y$ such that $H_k$ is identity on $[0,1] \times (V_k^c)$. For each
  $k$ define $h_k \colon Y \to Y$ by $h_k(y) = H_k(1, y)$; note that
  by definition of an isotopy, $h_k$ is a homeomorphism.

  Suppose that the $h_k$'s and $V_k$'s satisfy the hypotheses of
  \cref{thm:vks-ambient-homeomorphism}, and for all $n \in \NN$ define
  $\ms h_n = \comp_{k=1}^n h_k$. Let $t_0 = 0$ and let
  $\pn{t_k}_{k=1}^\infty$ be a strictly increasing sequence in
  $\pn{0,1}$ converging to $1$. Then $\ms H_\infty \colon [0,1] \times
  Y \to Y$ defined by
  \[
    \ms H_\infty(t,y) =
    \begin{cases}
      H_1\pn{\frac{t - t_0}{t_1 - t_0},\ y} & \text{ if } t
      \in \bk{t_0, t_1} \\
      H_2\pn{\frac{t - t_1}{t_2 - t_1},\ \ms h_1(y)} & \text{ if } t \in
      \pb{t_1, t_2} \\
      H_3\pn{\frac{t - t_2}{t_3 - t_2},\ \ms h_2(y)} & \text{ if } t \in
      \pb{t_2, t_3} \\
      \,\,\,\vdots & \\
      H_{k}\pn{\frac{t - t_{k-1}}{t_{k} - t_{k-1}},\ \ms h_{k-1}(y)} & \text{ if } t \in
      \pb{t_{k-1}, t_{k}} \\
      \,\,\,\vdots &  \\
      \ms h_\infty(y) & \text{ if } t = 1,
    \end{cases}
  \]
  is an isotopy.
\end{theorem}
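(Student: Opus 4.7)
The plan is to verify the three defining conditions on $\ms H_\infty$: it is the identity at $t = 0$, each spatial slice is a homeomorphism, and it is continuous on $[0,1] \times Y$. The initial condition is immediate from the top branch of the piecewise definition: $\ms H_\infty(0, y) = H_1(0, y) = y$, since $t_0 = 0$ and $H_1$ is an isotopy.

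For the spatial-slice condition, at $t = 1$ we have $\ms H_\infty(1, \cdot) = \ms h_\infty$, which is a homeomorphism by \cref{thm:vks-ambient-homeomorphism}. For $t \in [t_{k-1}, t_k]$ with $k$ finite, $\ms H_\infty(t, \cdot) = H_k(\tau, \cdot) \circ \ms h_{k-1}$ with $\tau \in [0, 1]$, a composition of two homeomorphisms.

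Most of the work lies in verifying continuity. On $[0, 1) \times Y$ this is routine: the closed cover $\set{[t_{k-1}, t_k] \times Y}_{k \geq 1}$ is locally finite there, on each slab $\ms H_\infty$ is a continuous composition of $H_k$ and $\ms h_{k-1}$, and the pieces agree on boundaries because $H_k(1, \ms h_{k-1}(y)) = \ms h_k(y) = H_{k+1}(0, \ms h_k(y))$. The gluing lemma then delivers continuity on $[0, 1) \times Y$.

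The subtle step --- and the one I expect to be the main obstacle --- is continuity at points $(1, y_0)$. Here I would recycle the dichotomy from the proof of \cref{thm:vks-ambient-homeomorphism}. Given $\varepsilon > 0$, pick $n_0$ with $\diam(\bigcup_{k > n_0} V_k) < \varepsilon/3$. For any $(t, y)$ with $t \in [t_{m-1}, t_m]$ and $m > n_0$, split according to whether $\ms h_{n_0}(y) \in \bigcup_{k > n_0} V_k$. If not, then $\ms h_k(y) = \ms h_{n_0}(y)$ for all $k \geq n_0$; this common value avoids $V_m$, so $\ms H_\infty(t, y) = \ms h_{n_0}(y) = \ms h_\infty(y)$ exactly. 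Otherwise the forward orbit stays trapped in $\bigcup_{k > n_0} V_k$; the key small observation is that $H_m(\tau, \cdot)$ sends $V_m$ into itself (by bijectivity together with its identity behavior on $V_m^c$), so $\ms H_\infty(t, y)$ and $\ms h_\infty(y)$ both lie in $\overline{\bigcup_{k > n_0} V_k}$, giving $d(\ms H_\infty(t, y), \ms h_\infty(y)) < \varepsilon/3$ uniformly in $y$. A triangle inequality with the continuity of $\ms h_\infty$ then produces a product-metric neighborhood of $(1, y_0)$ on which $\ms H_\infty$ stays within $\varepsilon$ of $\ms H_\infty(1, y_0)$, completing the argument.
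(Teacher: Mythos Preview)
Your proof is correct and tracks the paper's argument closely for the first two isotopy conditions. For continuity, however, you take a slightly different route. The paper introduces auxiliary ``frozen'' isotopies $\ms H_n$ (which agree with $\ms H_\infty$ on $[0,t_n]$ and then stay constant at $\ms h_n$) and shows $\ms H_n \uconv \ms H_\infty$ via exactly the $V_k$ dichotomy you describe; continuity of $\ms H_\infty$ then follows in one stroke from \cref{prop:uniform-convergence-continuity}. You instead verify continuity directly: a locally-finite gluing argument on $[0,1)\times Y$, and at points $(1,y_0)$ the same dichotomy combined with a triangle inequality against the already-known continuity of $\ms h_\infty$. The underlying estimate is identical --- your observation that $H_m(\tau,\cdot)$ maps $V_m$ into itself is precisely what the paper uses to keep $\ms H_\infty(t,y)$ trapped in $\ol{\bigcup_{k>n_0}V_k}$ --- so the difference is one of packaging. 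The paper's version buys a cleaner parallel with the proof of \cref{thm:vks-ambient-homeomorphism} and lets it cite a standard result rather than argue continuity at $t=1$ by hand; your version avoids introducing the $\ms H_n$'s at the cost of doing that bookkeeping explicitly.
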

\begin{proof}
  To show $\ms H_\infty$ is an isotopy we must show
  \begin{enumerate}
    \item $\ms H_\infty(0, \cdot)$ is
      identity, \label{cond:is-identity-at-0}
    \item For each $t \in [0,1]$, $\ms H_\infty(t, \cdot)$ is a
      homeomorphism, and \label{cond:is-homeomorphism-at-each-t}
    \item $\ms H_\infty$ is
      continuous. \label{cond:is-continuous-overall}
  \end{enumerate}
  We prove these in the order above.
  \begin{enumerate}
    \item For all $y \in Y$, $\ms H_\infty(0, \cdot) = H_1(0, \cdot)$.
      Since $H_1$ is an isotopy, $H_1(0, \cdot)$ is identity (by
      definition) and this proves (\ref{cond:is-identity-at-0}).
    \item To prove (\ref{cond:is-homeomorphism-at-each-t}) we break
      things up into three subcases.
      \begin{enumerate}
        \item Suppose $t = 0$. Then $\ms H_\infty(t, \cdot) = \ms
          H_\infty(0, \cdot)$ which is the identity and thus a
          homeomorphism. \cmark
        \item Suppose $t \in \pn{0,1}$. Then there exists $k \in \NN$
          such that $t \in \pb{t_{k-1}, t_{k}}$. Recall that by
          construction,
          \begin{equation}
            \ms H_\infty(t,\cdot) = H_{k}\pn{\frac{t - t_{k-1}}{t_{k} -
                t_{k-1}},\ \ms h_{k-1}(\cdot)}. \label{eq:h-inf-h-k}
          \end{equation}
          Define $\ms g(\cdot) \coloneqq H_k\pn{\frac{t - t_{k-1}}{t_k
          - t_{k-1}},\ \cdot}$. By definition of an isotopy, $\ms g$
          is a homeomorphism. Also, $\ms h_{k-1}$ is a finite
          composition of homeomorphisms and thus a homeomorphism.
          Rewriting \cref{eq:h-inf-h-k} gives us that
          \[
          \ms H_\infty(t, \cdot) = \ms
          (g \circ \ms h_{k-1})(\cdot)
          \]
          which is a finite composition of homeomorphisms and hence
          itself a homeomorphism, as desired. \cmark
        \item Now suppose $t = 1$. Then $\ms H_\infty(t, \cdot) = \ms
          h_\infty(\cdot)$. Since the $h_k$'s, $V_k$'s were assumed
          to satisfy the hypotheses of
          \cref{thm:vks-ambient-homeomorphism} we get that $\ms
          h_\infty$ is a homeomorphism as desired. \cmark
      \end{enumerate}
      In either case, we see $\ms H_\infty(t, \cdot)$ is a
      homeomorphism.
    \item Finally, it remains to show $\ms H_\infty$ is continuous. We
      employ uniform convergence.

      Define a sequence of isotopies $\ms H_n$ as follows: For each $n
      \in \NN$, let $\ms H_n \colon [0,1] \times Y \to Y$ be given by
      \[
      \ms H_n(t,y) =
      \begin{cases}
        H_1\pn{\frac{t - t_0}{t_1 - t_0},\ y} & \text{ if } t
        \in \bk{t_0, t_1} \\
        H_2\pn{\frac{t - t_1}{t_2 - t_1},\ \ms h_1(y)} & \text{ if } t \in
        \pb{t_1, t_2} \\
        \,\,\,\vdots & \\
        H_{n}\pn{\frac{t - t_{n-1}}{t_{n} - t_{n-1}},\ \ms h_{n-1}(y)}
        & \text{ if } t \in \pb{t_{n-1}, t_{n}} \\
        \ms h_n(y) & \text{ if } t \in \pb{t_{n}, 1}.
      \end{cases}
      \]
      That is, we follow $\ms H_\infty(t, y)$ until we reach
      $t=t_{n}$ and then we freeze. One can verify that the $\ms
      H_n(t, y)$ are indeed isotopies; of particular note, they are
      continuous. We now show $\ms H_n \uconv \ms H_\infty$.

      Let $\varepsilon > 0$ be given. Then by the hypotheses on the
      $V_k$ there exists $n_0 \in \NN$ such that
      \[
      \diam\pn[bigg]{\bigcup_{k>n_0}^\infty V_k} < \varepsilon.
      \]
      Let $n > n_0$ be arbitrarily chosen, and similarly let $(t, y)
      \in [0,1] \times Y$. We show $d(\ms H_n(t,y), \ms H_\infty(t,
      y)) < \varepsilon$. We have two subcases.
      \begin{enumerate}
        \item First, suppose $t \in \bk{0, t_{n}}$. Then
          $\ms H_n(t, y) = \ms H_\infty(t,y)$ and so we have $d(\ms
          H_n(t,y), \ms H_\infty(t,y)) = 0$ and the bound holds.
        \item Now, suppose $t \in \pb{t_{n}, 1}$. If $y
          \in \pn{\bigcup_{k>n_0}^\infty V_k}^c$, then $\ms
          H_n(t, y) = \ms H_\infty(t, y)$, so we have $d(\ms H_n(t,y),
          \ms H_\infty(t,y)) = 0$ and the bound holds. Else, note that
          both of $\ms H_n(t,y)$, $\ms H_\infty(t,y) \in
          \ol{\bigcup_{k>n_0}^\infty V_k}$, hence
          \[
          d(\ms H_n(t,y), \ms H_\infty(t,y)) < \varepsilon
          \]
          as desired.
      \end{enumerate}
      In either case, we have $d(\ms H_n(t,y), \ms H_\infty(t,y)) <
      \varepsilon$. As $(t,y)$ were arbitrarily chosen, this implies
      $\ms H_n \uconv \ms H_\infty$. By
      \cref{prop:uniform-convergence-continuity}, $\ms H_\infty$ is
      continuous.
  \end{enumerate}
  It follows that $\ms H_\infty$ is an isotopy as desired.
\end{proof}
In the next section, we apply this result to a variety of curves,
beginning with the example from \cref{fig:scintillating-curves}.

\section{Various Applications of
  \cref{thm:vks-ambient-isotopy}}\label{sec:various-applications} The
first few examples will all make use of the following lemma, which
allows us to remove the bijectivity hypothesis from
\cref{thm:vks-ambient-isotopy}.
\begin{lemma}\label{lem:disjoint-vks}
  Let all variables be quantified as in
  \cref{thm:vks-ambient-isotopy}. Then if the $V_k$'s are all
  disjoint, $\ms H_\infty(1, \cdot)$ is guaranteed to be a bijection.
\end{lemma}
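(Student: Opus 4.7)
The plan is to exploit disjointness of the $V_k$'s to derive a closed-form pointwise description of $\ms h_\infty$, from which bijectivity is transparent. Note that there is nothing to prove about existence of the limit: as observed in the remark following \cref{thm:vks-ambient-homeomorphism}, conditions (\ref{cond:vk-vanish}) and (\ref{cond:vk-compactly-contained}) already force the pointwise limit $\ms h_\infty$ to exist, so only bijectivity is at stake.

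The crucial preliminary observation is that each $h_k$ satisfies $h_k(V_k) = V_k$. Indeed, $h_k$ is a bijection of $Y$ fixing $V_k^c$ pointwise, so $h_k(V_k^c) = V_k^c$, and taking complements yields $h_k(V_k) = V_k$. Combined with disjointness, this pins down the orbit of any $y$ under the $\ms h_n$. Specifically, the plan is to establish
\[
  \ms h_\infty(y) =
  \begin{cases}
    h_k(y) & \text{if } y \in V_k \text{ for some (necessarily unique) } k, \\
    y & \text{if } y \notin \bigcup_k V_k,
  \end{cases}
\]
by a direct telescoping argument. If $y \in V_k$, then disjointness forces $y \in V_j^c$ for $j < k$, so $h_j(y) = y$ and $\ms h_{k-1}(y) = y$; applying $h_k$ lands us at $h_k(y) \in V_k \subseteq V_j^c$ for every $j > k$, so all subsequent $h_j$'s fix $h_k(y)$. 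Hence $\ms h_n(y)$ stabilizes at $h_k(y)$ for $n \ge k$, and the second case is immediate since none of the $h_k$'s move $y$.

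From this formula, bijectivity can be read off directly: the collection $\{V_k\}_{k\in\NN} \cup \{(\bigcup_k V_k)^c\}$ partitions $Y$, and $\ms h_\infty$ sends each piece bijectively to itself (via $h_k|_{V_k}$ on $V_k$ and via the identity on $(\bigcup_k V_k)^c$), with pairwise disjoint images. I do not anticipate any substantive obstacle here; the only subtlety is confirming $h_k(V_k) = V_k$, which relies jointly on bijectivity of $h_k$ and on its being identity off $V_k$.
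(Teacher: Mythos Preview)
Your proposal is correct and follows essentially the same approach as the paper: partition $Y$ into $(\bigcup_k V_k)^c$ and the individual $V_k$'s, then observe that $\ms h_\infty$ restricts to a self-bijection of each piece. If anything, your argument is slightly more careful, since you explicitly verify $h_k(V_k) = V_k$ (needed to guarantee the image of each $V_k$ stays inside $V_k$), a step the paper leaves implicit.
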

\begin{proof}
  If the $V_k$ are all disjoint, then defining $U =
  \bigcup_{k=1}^\infty V_k$ we can write $Y$ as the disjoint union
  \[
    Y = (U^c) \sqcup \pn[bigg]{\bigsqcup_{k=1}^\infty V_k}.
  \]
  Note, $\ms H_\infty(1, \cdot)$ is identity on $U^c$ and hence a
  bijection. Since the $V_k$ are all disjoint, $\ms H_\infty(1,
  \cdot)|_{V_k} = \ms H_k(1, \cdot)|_{V_k}$, the latter of which is a
  homeomorphism and thus bijective, so $\ms H_\infty(1, \cdot)$ is a
  bijection on each of the $V_k$.

  Thus $\ms H_\infty(1, \cdot)$ is a bijection overall.
\end{proof}

\begin{proposition}\label{prop:wild-looking-unknot-tame}
  The curve shown in \cref{fig:wild-looking-unknot-redux} below is
  tame.\footnote{Strictly speaking we have not defined tameness for
    curves, only for knots. A \emph{tame curve} is a curve that's
    ambient homeomorphic (equivalently, ambient isotopic) to a
    polygonal curve.} In particular, it is an unknot.
  \begin{figure}[H]
    \centering
    \centering
    \includegraphics[width=.4\linewidth]{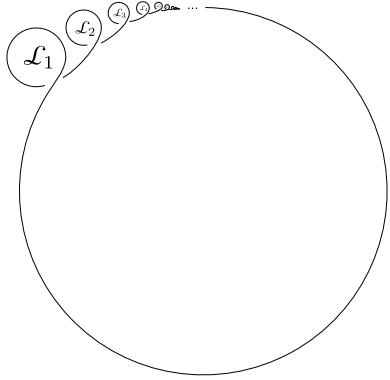}
    \caption{A wild-looking unknot, redux.}
    \label{fig:wild-looking-unknot-redux}
  \end{figure}
\end{proposition}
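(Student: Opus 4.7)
The plan is to apply \cref{thm:vks-ambient-isotopy} together with \cref{lem:disjoint-vks}. Label the loops in \cref{fig:wild-looking-unknot-redux} as $\ms L_1, \ms L_2, \ldots$ and let $p \in \RR^3$ denote the point to which they accumulate. The informal recipe in \cref{sec:introduction} already lays out the strategy: perform an R1 move on $\ms L_n$ during the time interval $\bk{1 - 2^{-(n-1)},\, 1 - 2^{-n}}$. The task is to repackage this in the form demanded by \cref{thm:vks-ambient-isotopy}.

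First I would choose, for each $n \in \NN$, an open neighborhood $V_n \subseteq \RR^3$ of $\ms L_n$ (together with a short arc of the curve through it), arranged so that: (i) the $V_n$ are pairwise disjoint; (ii) each $V_n$ is contained in a single closed ball $A$ around $p$ large enough to contain the whole curve; and (iii) $V_n$ lies in a ball of radius $r_n$ centered at $p$, with $r_n \to 0$. All three conditions can be arranged because the loops are spatially separated from each other but accumulate at $p$, so for each $n$ we have room to shrink $V_n$ down. Inside each $V_n$ I would then construct a standard PL Reidemeister I ambient isotopy $H_n \colon [0,1] \times \RR^3 \to \RR^3$ supported in $V_n$ (i.e.\ identity on $[0,1] \times V_n^c$) whose time-$1$ map deletes $\ms L_n$ and replaces it with a straight-through arc. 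This is just the model R1 move carried out in a small ball.

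Verifying the hypotheses of \cref{thm:vks-ambient-isotopy} is then routine: each homeomorphism $h_n = H_n(1, \cdot)$ is identity off $V_n$; condition (\ref{cond:vk-vanish}) follows from $V_n$ sitting inside a ball of radius $r_n$ about $p$ with $r_n \to 0$; condition (\ref{cond:vk-compactly-contained}) holds with the ball $A$; and bijectivity of $\ms h_\infty$ is automatic by \cref{lem:disjoint-vks}, since the $V_n$ are pairwise disjoint. Taking $t_n = 1 - 2^{-n}$, the theorem produces an ambient isotopy $\ms H_\infty$ from the identity to $\ms h_\infty$.

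Finally I would identify $\ms h_\infty$ applied to the original curve: outside $\bigcup_n V_n$ the curve is unchanged; inside each $V_n$ the loop has been replaced by a straight arc; and at $p$ itself nothing accumulates since $\ms h_\infty$ fixes $p$. The resulting simple closed curve is (ambient isotopic to) a polygonal round circle, i.e.\ the unknot. The only delicate point is whether the limit really is an embedding rather than a pinched curve at $p$ — but this is exactly what \cref{thm:vks-ambient-isotopy} guarantees via its uniform-convergence argument, so the hard analytic work has already been done at the abstract level; the remaining check, that removing every $\ms L_n$ geometrically leaves an unknot, is a visual certainty from the figure.
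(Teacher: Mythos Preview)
Your proposal is correct and follows essentially the same approach as the paper: both invoke \cref{thm:vks-ambient-isotopy} together with \cref{lem:disjoint-vks}, choosing pairwise disjoint $V_k$'s around the individual loops so that bijectivity of the limit comes for free. The only cosmetic difference is direction---the paper starts from the standard unknot $f_0$ and \emph{inserts} the Reidemeister~I loops one at a time to reach $f_1$, whereas you start from the wild-looking curve and \emph{remove} them; since ambient isotopy is symmetric, either orientation establishes the result.
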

\begin{proof}
  Let $f_0 \colon S^1 \into \RR^3$ be the standard unknot, and let
  $f_1 : S^1 \into \RR^3$ be an embedding yielding a diagram like
  \cref{fig:wild-looking-unknot-redux}.\footnote{A parametrization can
    be found in \cite{KobayashiThesis}, although it is given in the
    context of a ``theorem'' about tameness and parametrizations that
    turns out to be incorrect.} We apply
  \cref{thm:vks-ambient-isotopy} to construct an ambient isotopy $\ms
  H_\infty \colon [0,1] \times \RR^3 \to \RR^3$ taking $f_0$ to $f_1$.

  Consider the sequence of $V_k$'s chosen as follows.
  \begin{figure}[H]
    \centering
    \includegraphics[scale=.508]{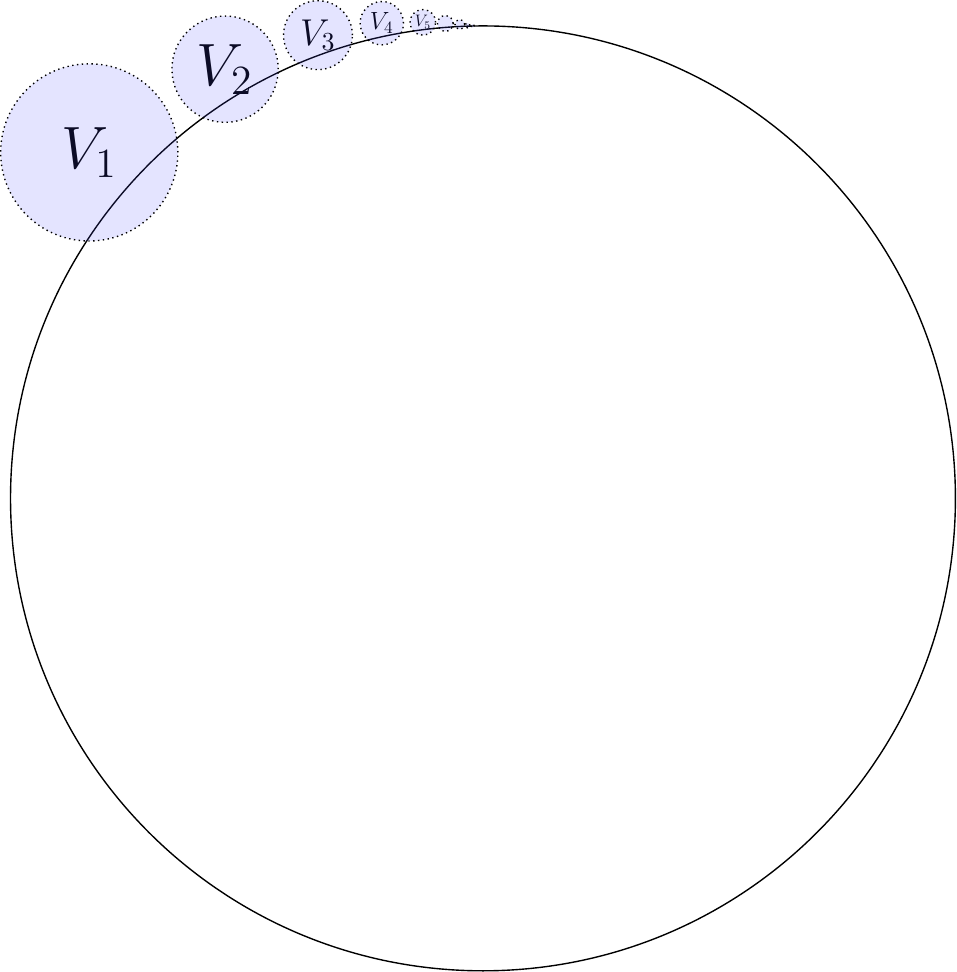}
    \caption{$V_k$'s.}
  \end{figure}
  One can verify that $\lim_{n\to\infty} \diam\pn{\bigcup_{k=n}^\infty
    V_k} = 0$ and that there exists a compact set $A \subseteq \RR^3$
  such that $\bigcup_{k=1}^\infty V_k \subseteq A$.

  For all $k \in \NN$, let $H_k \colon [0,1] \times \RR^3 \to \RR^3$
  be an ambient isotopy inserting a Reidemeister I into the arc bound
  in $V_k$. Use these $H_k$ to define $\ms H_\infty$ as in
  \cref{thm:vks-ambient-isotopy}. By \cref{lem:disjoint-vks}, $\ms
  H_\infty(1, \cdot)$ is a bijection. Thus by
  \cref{thm:vks-ambient-isotopy}, $\ms H_\infty$ is an ambient isotopy
  from $f_0$ to $f_1$.
\end{proof}

\begin{proposition}\label{prop:countable-r2}
  The following curve is tame.
  \begin{figure}[H]
    \centering
    \includegraphics[angle=-90]{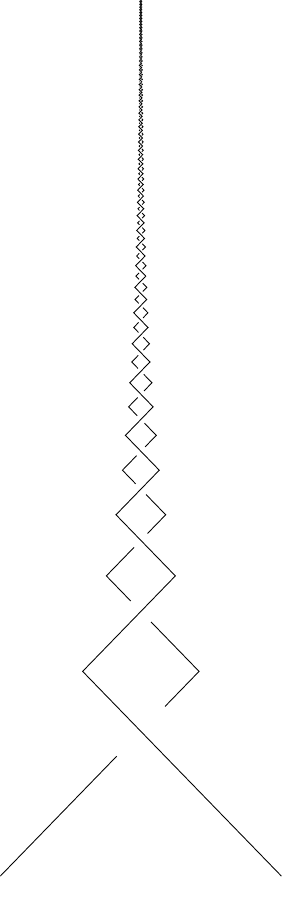}
  \end{figure}
\end{proposition}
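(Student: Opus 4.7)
The plan is to mirror the proof of \cref{prop:wild-looking-unknot-tame} almost verbatim, swapping the Reidemeister I moves for Reidemeister II moves. Let $f_0 \colon S^1 \into \RR^3$ be the standard unknot and let $f_1 \colon S^1 \into \RR^3$ be a parametrization whose image is the pictured countable-R2 diagram. I will construct an ambient isotopy $\ms H_\infty$ from $f_0$ to $f_1$ by invoking \cref{thm:vks-ambient-isotopy}, with \cref{lem:disjoint-vks} used to dispatch the bijectivity hypothesis.

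First, I would choose the $V_k$'s to be small open balls, one for each pair of crossings that is removed by a single Reidemeister II move, positioned so that $V_k$ encloses exactly that pair and nothing else. Crucially, the diagram in \cref{prop:countable-r2} should be drawn so these crossing-pairs accumulate at a single point along the curve, with geometrically shrinking diameters; this makes the choice of disjoint $V_k$'s with $\diam(V_k) \to 0$ geometrically straightforward. One then verifies by inspection that $\lim_{n\to\infty} \diam\bigl(\bigcup_{k=n}^\infty V_k\bigr) = 0$ and that all of the $V_k$ fit inside the interior of some sufficiently large closed ball $A \subset \RR^3$, giving conditions (1) and (2) of \cref{thm:vks-ambient-homeomorphism}.

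Next, for each $k \in \NN$ let $H_k \colon [0,1] \times \RR^3 \to \RR^3$ be an ambient isotopy supported in $V_k$ that performs a single Reidemeister II move inside $V_k$, eliminating the $k$-th pair of crossings. Such an $H_k$ exists because a Reidemeister II move is a local deformation that can be performed inside any open neighborhood of the bigon. Because the $V_k$'s were chosen pairwise disjoint, \cref{lem:disjoint-vks} applies and guarantees that the composite limit $\ms H_\infty(1, \cdot)$ is a bijection, which is condition (3) of \cref{thm:vks-ambient-homeomorphism}. All hypotheses of \cref{thm:vks-ambient-isotopy} are therefore satisfied, so $\ms H_\infty$ is an ambient isotopy; tracing definitions, it carries $f_0$ to $f_1$, certifying that the curve is tame (in fact an unknot).

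The main obstacle is justifying that the $V_k$'s can genuinely be made pairwise disjoint and simultaneously shrinking to a single point. This is the crux distinction between this situation and the Fox–Artin-style wild curve analyzed in \cref{sec:does-not-apply}: there, any neighborhood around a given crossing pair is forced to overlap earlier neighborhoods because of the ``sliding'' required by the R2 moves, which ultimately drags ambient points into the wild point in the limit. Here, the diagram is arranged so that the $k$-th R2 move is already staged inside its own bigon without any preparatory sliding, so disjointness holds by construction and the argument goes through. Everything else (the diameter condition, the compact containment, the existence of the local R2 isotopies) is routine once the picture is set up correctly.
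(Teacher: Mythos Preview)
Your argument is correct and follows the same template as the paper---choose shrinking, compactly contained $V_k$'s, perform a local Reidemeister~II move in each, and invoke \cref{lem:disjoint-vks} to get bijectivity so that \cref{thm:vks-ambient-isotopy} applies. The one substantive difference is that the paper applies \cref{thm:vks-ambient-isotopy} \emph{twice} rather than once: it starts from a simple tame curve, inserts roughly half of the R2 crossings in a first pass (with one family of disjoint $V_k$'s), and then inserts the remaining crossings in a second pass (with a fresh family of disjoint $V_k$'s). The paper explicitly remarks that this two-step decomposition ``is not strictly necessary, but the diagram is a bit less cluttered this way,'' so your one-pass version is legitimate in principle.

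That said, the place where you wave your hands---``the $k$-th R2 move is already staged inside its own bigon \ldots\ so disjointness holds by construction''---is precisely the spot where the paper is being careful. Depending on how the bigons in the target diagram sit relative to one another, a single family of pairwise-disjoint $V_k$'s enclosing one bigon each may not be immediately available from the final picture; the paper's two-stage build is its device for making disjointness of the $V_k$'s evident at each step without having to argue it for the full diagram at once. So: same idea, same lemma, same conclusion; the paper simply trades one invocation of the theorem for two in exchange for a cleaner disjointness verification.
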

\begin{proof}[Proof (Sketch)]
  We apply \cref{thm:vks-ambient-isotopy} twice. This two-step method
  is not strictly necessary, but the diagram is a bit less cluttered
  this way. Consider the following starting curve:
  \begin{figure}[H]
    \centering
    \includegraphics[angle=-90]{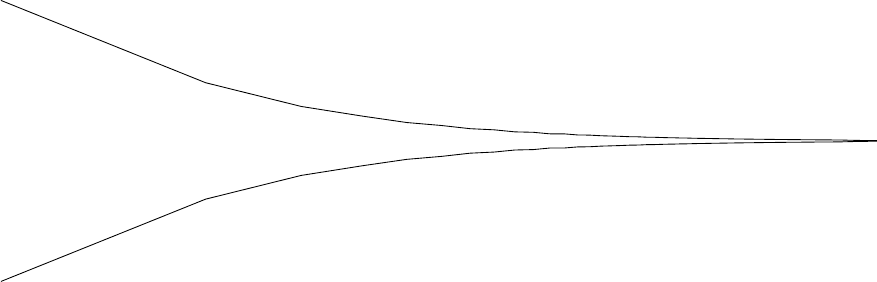}
  \end{figure}
  Apply Reidemeister II moves within the dotted regions below:
  \begin{figure}[H]
    \centering
    \includegraphics{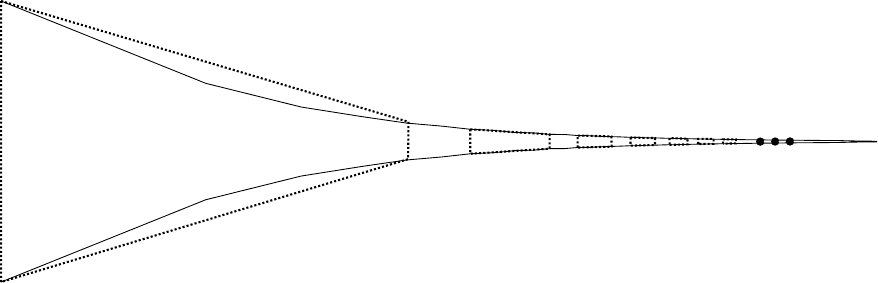}
  \end{figure}
  Since these $V_k$ are disjoint, we can again apply
  \cref{lem:disjoint-vks} to obtain an ambient isotopy. The result
  looks something like the following:
  \begin{figure}[H]
    \centering
    \includegraphics[angle=-90]{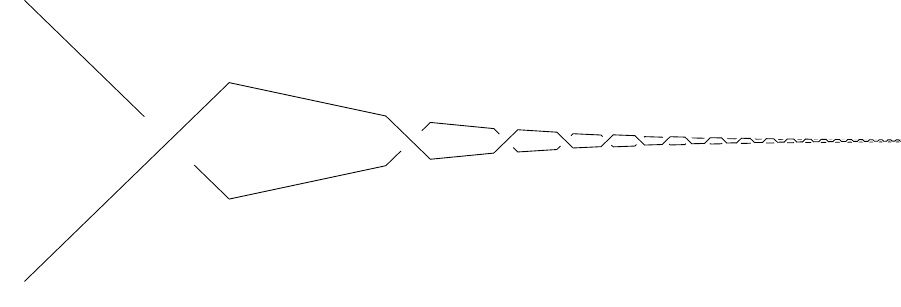}
  \end{figure}
  Now, perform Reidemeister II moves in the following regions:
  \begin{figure}[H]
    \centering
    \includegraphics[angle=-90]{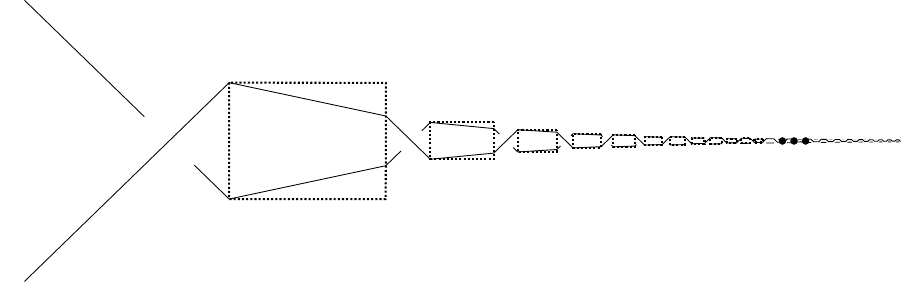}
  \end{figure}
  Again, the $V_k$ here are all disjoint, hence one can apply
  \cref{lem:disjoint-vks} to show that this is an ambient isotopy. The
  end result is
  \begin{figure}[H]
    \centering
    \includegraphics[angle=-90]{figures/countable-r2-v2.pdf}
  \end{figure}
  which is the desired diagram.
\end{proof}

We now consider a similar curve, this time constructed using
Reidemeister I moves. This will be the most technical argument of the
paper. We advise the reader to read through
\cref{ex:remarkable-curve-redux} in the next section before
continuing. This is because \cref{ex:remarkable-curve-redux} shows how
we can lose bijectivity in the limit, and the bulk of the challenge in
\cref{prop:countable-r1-2} is addressing similar concerns. We have to
address bijectivity in a manner like this whenever we have points that
are moved by infinitely many of the $H_k$'s (whereas in
\cref{prop:countable-r2}, each point is moved by only finitely many
$H_k$).

\begin{proposition}\label{prop:countable-r1-2}
  Let $f_0 \colon [0,1] \into \RR^3$ be
  \begin{figure}[H]
    \centering
    \includegraphics[scale=.625]{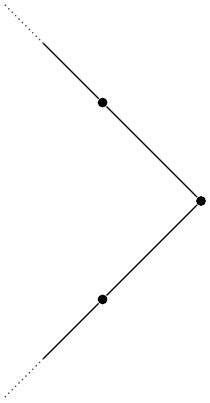}
    \caption{Our starting arc.}
    \label{fig:recursive-r1-starting-arc}
  \end{figure}
  and let $f_1 \colon [0,1] \into \RR^3$ be
  \begin{figure}[H]
    \centering
    \includegraphics[angle=-90]{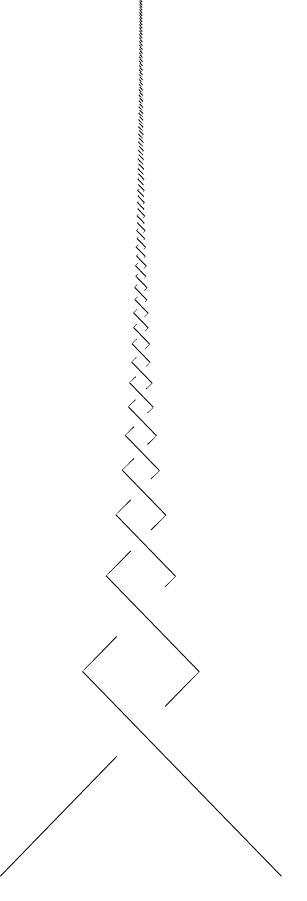}
    \caption{A different countable sequence of Reidemeister I moves.}
    \label{fig:another-countable-r1}
  \end{figure}
  Then $f_0 \cong f_1$.
\end{proposition}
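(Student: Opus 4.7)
The plan is to apply \cref{thm:vks-ambient-isotopy} with a sequence of ambient isotopies $H_k$ performing Reidemeister~I moves in nested neighborhoods $V_k$ of the accumulation point $p$ of the loops in $f_1([0,1])$, so that $V_1 \supset V_2 \supset \cdots$, $\bigcap_k V_k = \{p\}$, and $\diam(V_k) \to 0$. The nesting reflects the recursive structure of $f_1$, but it also prevents us from using \cref{lem:disjoint-vks}: points arbitrarily close to $p$ are moved by infinitely many $H_k$. The diameter and compact-containment hypotheses of \cref{thm:vks-ambient-isotopy} are immediate from this setup; all the difficulty is in proving bijectivity of $\ms h_\infty$.

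My approach exploits the following observation: because the $V_k$ are nested, $h_j$ is the identity on $V_k^c$ for every $j \geq k$, so any point $y$ with $\ms h_k(y) \in V_k \setminus V_{k+1}$ is fixed by every subsequent $h_j$. Writing $V_0 \coloneqq \RR^3$, this yields a disjoint decomposition
\[
  \RR^3 \;=\; \bigsqcup_{k=0}^{\infty} U_k \;\sqcup\; U_\infty,
  \qquad
  U_k = \ms h_k^{-1}(V_k \setminus V_{k+1}),
  \quad
  U_\infty = \bigcap_{k=0}^{\infty} \ms h_k^{-1}(V_k),
\]
on which $\ms h_\infty$ restricts to the homeomorphism $\ms h_k$ mapping $U_k$ bijectively onto $V_k \setminus V_{k+1}$, and to the constant map $y \mapsto p$ on $U_\infty$. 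Since the targets $V_k \setminus V_{k+1}$ are pairwise disjoint and avoid $p$, both injectivity and surjectivity of $\ms h_\infty$ reduce to the single claim that $U_\infty$ is a singleton.

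The hard part will be establishing $|U_\infty| = 1$, because a priori the compositions $\ms h_k$ could distort preimages severely. I would rule this out by choosing each $H_k$ to act only on a thin tubular neighborhood of the short arc segment inside $V_k$ where the Reidemeister~I loop is inserted. Concretely, I would arrange that each $h_k$ and its inverse are Lipschitz with constant $1 + \varepsilon_k$ for a summable sequence $\varepsilon_k$, so that every $\ms h_k^{-1}$ satisfies the uniform bound $C \coloneqq \prod_{k=1}^{\infty}(1+\varepsilon_k) < \infty$. Then
\[
  \diam\bigl(\ms h_k^{-1}(V_k)\bigr) \;\leq\; C \cdot \diam(V_k) \;\longrightarrow\; 0,
\]
and since $U_\infty$ is a decreasing intersection of nonempty compact sets whose diameters tend to zero, it must consist of a single point. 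Feeding bijectivity back into \cref{thm:vks-ambient-isotopy} then produces the desired ambient isotopy realizing $f_0 \cong f_1$.
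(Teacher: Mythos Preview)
Your decomposition $\RR^3 = \bigsqcup_k U_k \sqcup U_\infty$ and the reduction of bijectivity to the single claim $|U_\infty| = 1$ are correct, and this is exactly the shape of the paper's argument as well. The gap is in how you propose to prove $|U_\infty| = 1$.

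You assert that the $h_k$ can be chosen bi-Lipschitz with constants $1 + \varepsilon_k$ for summable $\varepsilon_k$, but this is not justified and in fact fails for the self-similar curve $f_1$ at hand. Bi-Lipschitz constants are scale-invariant: if $S$ is a similarity then $S h S^{-1}$ has the same bi-Lipschitz constant as $h$. Since each loop of $f_1$ is a scaled copy of the first, the task ``insert the $k$-th loop into the wedge at $q_{k-1}$'' is conjugate by a similarity to ``insert the first loop,'' so the optimal bi-Lipschitz constant is the \emph{same} number $L_0$ for every $k$. As $h_1$ is a nontrivial homeomorphism supported in a compact set it cannot be an isometry, so $L_0 > 1$; hence $\varepsilon_k \geq L_0 - 1$ for all $k$ and the series diverges. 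Your proposed fix---supporting $H_k$ on a thin tubular neighborhood---goes in the wrong direction: shrinking the support concentrates the same deformation into a smaller region and \emph{worsens} the Lipschitz constant. What a thin support does give is smallness of $\|h_k - \mathrm{id}\|_\infty$, which is not the same thing.

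The paper confronts exactly this obstacle and explicitly notes that the lower-Lipschitz bound $c$ is independent of $k$ because the $\msf H_k$ are identical up to scaling. Its remedy is to interleave auxiliary PL isotopies $\msf H_{k+.5}$ between consecutive Reidemeister~I moves: each $\msf H_{k+.5}$ leaves the diagram unchanged but radially pushes points in $V_{k+1}$ away from the tip $q_k$ by a factor $1/c$, so that the composite $\msf h_{k+1}\circ\msf h_{k+.5}$ never decreases a point's distance to the tip. Since the boxes halve in diameter at each step, every point other than the single preimage $y_\infty$ of the limit is eventually expelled from the $V_k$'s and thereafter fixed---which, in your notation, is precisely $|U_\infty| = 1$. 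This ``unsquishing'' step is the idea your sketch is missing.
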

\begin{proof}[Proof (Sketch)]
  We will construct the ambient isotopy from $f_0$ to $f_1$ by a
  recursive process. We will repeatedly insert Reidemeister I moves
  like the following:
  \begin{figure}[H]
    \centering
    \includegraphics{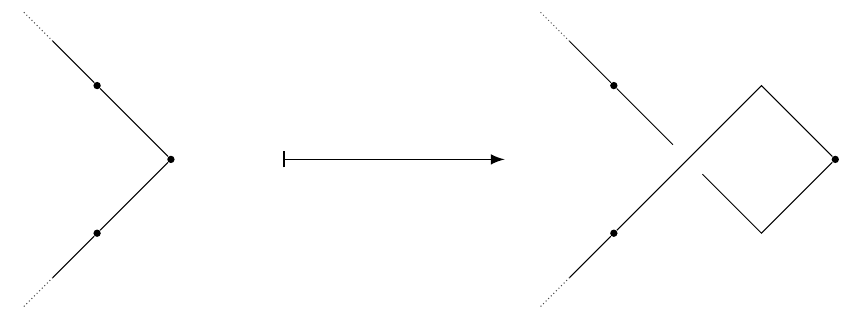}
    \caption{The general procedure.}
    \label{fig:general-procedure}
  \end{figure}
  We must do so in such a way that we can still argue bijectivity of
  $\ms H_\infty(1, \cdot)$. The key idea is to choose our $H_k$'s so
  that only one point (denoted $y_\infty$) gets moved infinitely-many
  times.\footnote{$y_\infty$ will be the point that gets sent to the
    limit of the twists in \cref{fig:another-countable-r1}. In our
    construction, $y_\infty$ will be the vertex in
    \cref{fig:recursive-r1-starting-arc}, but one can create other
    constructions where it is a different point.} We explicitly
  guarantee this by constructing our $H_k$'s so that for all $y \in
  \RR^3 \setminus \set{y_\infty}$, there exists $n_0$ such that for $n
  > n_0$, $\ms h_{n_0}(y)$ is unmoved by $H_n(t, \cdot)$.

  To that end, define $\ell$ as shown in \cref{fig:with-l}, and let
  $\varepsilon > 0$ with $\varepsilon \ll 1$.\footnote{Actually,
    $\varepsilon > 0$ can be arbitrarily chosen so long as for all $k
    \in \NN$ we have $\ol{V_{k+1}} \subseteq V_{k}^\circ$. We just
    choose $\varepsilon \ll 1$ because it makes for cleaner-looking
    diagrams.} Define $V_1$ to be a closed rectangular prism of
  dimensions $(6 + \varepsilon)\ell \times (2+\varepsilon)\ell \times
  (2+\varepsilon)\ell$, and $\msf{H_1}$ to be a PL ambient isotopy
  inserting the first loop such that $\msf{H}_1$ is identity off
  $V_1$.\footnote{We can assume PL-ness because the modifications can
    be realized by \emph{elementary
      moves}.}\textsuperscript{,}\footnote{The $6$ in our prism
    dimensions comes from the fact $\ell$ is defined to be
    $1/3$\textsuperscript{rd} of the length of the twist inserted in
    \cref{fig:with-l}, and the moves halve in size at each iteration.}
  Note, even though we define $V_1$ to be closed, we'll draw it with
  dotted lines in the below.
  \begin{figure}[H]
    \centering
    \includegraphics{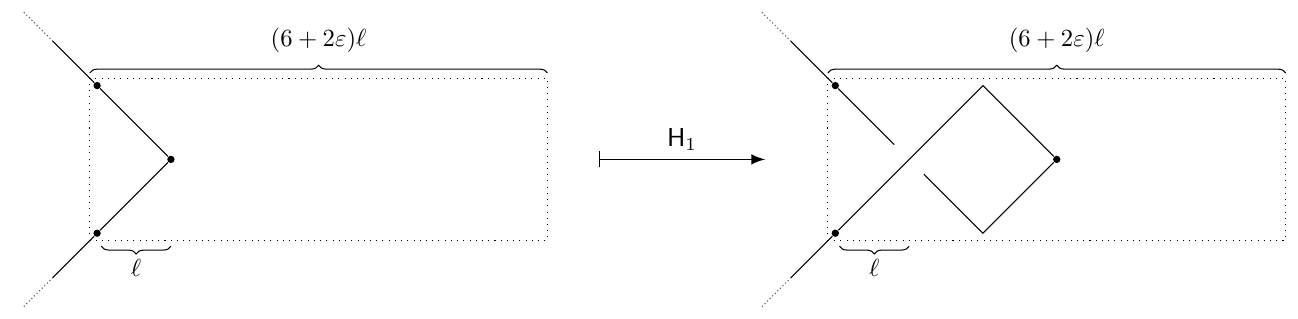}
    \caption{The same figure, now showing $V_1$.}
    \label{fig:with-l}
  \end{figure}

  Now, we describe the general strategy for inserting the
  $k+1$\textsuperscript{st} loop given the first $k$ loops. We want
  $V_{k+1}$, $\msf{H}_{k+1}$ to be half-scale versions of $V_k$,
  $\msf{H}_k$. The figure below shows this for $k = 1$.
  \begin{figure}[H]
    \centering
    \includegraphics{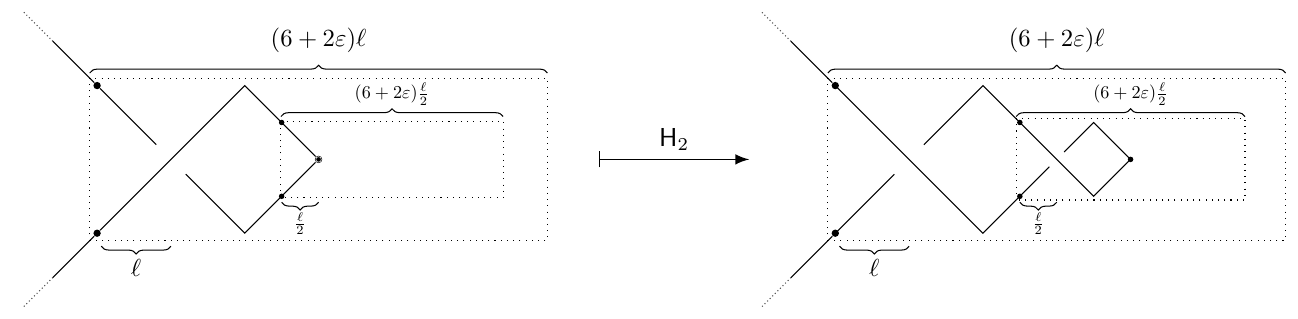}
    \caption{$V_1$ and $V_2$, with application of $\msf H_2$ shown.}
    \label{fig:v2}
  \end{figure}
  But, to make things work, it will be important to first apply some
  \emph{intermediate} ambient isotopy $\msf{H}_{k+.5}$ in between
  $\msf{H}_k$ and $\msf{H}_{k+1}$ such that $\msf H_{k+.5}$ does not
  change the diagram, but {does} help ensure that points in the
  ambient space aren't lost in the limit.

  \textbf{Desired Properties of $\bm{\msf H_{k+.5}}$:} We want $\msf
  H_{k+.5}$ to preemptively ``unsquish'' points that might be
  compressed together by $\msf H_{k+1}$. To determine exactly how much
  unsquishing we have to do, we look at a sort of inverse Lipschitz
  condition.

  Let $\msf{h}_{k+1} = \msf{H}_{k+1}(1, \cdot)$. Since $\msf{H}_{k+1}$
  is a PL ambient isotopy, $\msf{h}_{k+1}$ is a PL ambient
  homeomorphism, and thus there exists $c \in (0,1)$ such that for all
  $x_1, x_2 \in \RR^3$,
  \begin{equation}
    c \cdot d(x_1,x_2) \leq d(\msf{h_{k+1}}(x_1),
    \msf{h_{k+1}}(x_2)).\footnote{This essentially pops out of the
      finiteness condition on our simplicial complexes for PL
      maps.} \label{eq:bound-on-squishing}
  \end{equation}
  Note that because the $\msf H_{k}$ are all identical up to scaling,
  $c$ is independent of $k$.\footnote{One might ask why we can't have
    $c \geq 1$. Note that $V_{k+1}$ being bounded precludes $c > 1$.
    For $c = 1$, note that $\msf{h}_{k+1}$ is not a vector space
    isomorphism of $\RR^3$, and hence not an isometry on $\RR^3$;
    since $\msf h_{k+1}$ is identity outside $V_{k+1}$, isometry must
    fail on $V_{k+1}$.} We interpret \cref{eq:bound-on-squishing} as
  giving us a bound on how much $\msf{h}_{k+1}$ can ``squish'' points
  in the space together. Let $q_k$ be the tip of the twist before
  applying $\msf H_{k+1}$:
  \begin{figure}[H]
    \centering
    \includegraphics[scale=.75]{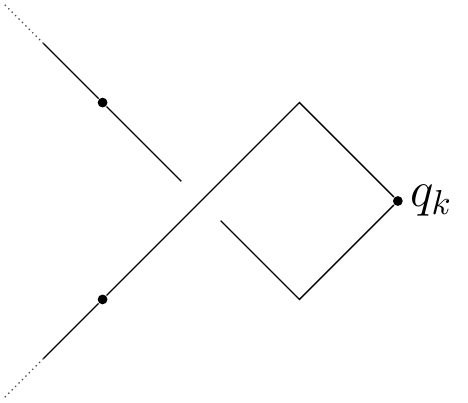}
    \caption{$q_k$ labeled.}
  \end{figure}
  For all $p \in V_{k+1}$, we want $\msf H_{k+.5}$ to be constructed
  to guarantee that either
  \begin{enumerate}
    \item $\msf{h}_{k+1}(\msf h_{k+.5}(p)) \in V_k \setminus V_{k+1}$
      (i.e.\ $p$ gets moved to the outer box), or
    \item $p$ gets ``moved farther from $q_k$ than it can be squished
      in later'':
      \begin{equation}
        \frac{1}{c} \cdot d(p, q_k) \leq d(\msf h_{k +.5}(p),
        \msf h_{k+.5}(q_k)). \label{eq:h1.5-bound}
      \end{equation}
  \end{enumerate}

  \textbf{Constructing $\bm{\msf H_{k+.5}}$:} Let $V_{k+.5}$ be a
  slightly-scaled-up version of $V_{k+1}$ such that $V_{k+1}
  \subsetneq V_{k+.5} \subsetneq V_k$. To make things easier, we will
  require that $V_{k+.5}$ also only intersects with $(\msf{h}_{k}
  \circ \msf h_{k-1} \circ \cdots \circ \msf h_{1} \circ f_0)([0,1])$
  in a wedge-shape and that $V_{k+.5}$ and $V_{k+1}$ share the same
  center of mass and have all sides parallel (see
  \cref{fig:v1.5-and-v2}).
  \begin{figure}[H]
    \centering
    \includegraphics[scale=.75]{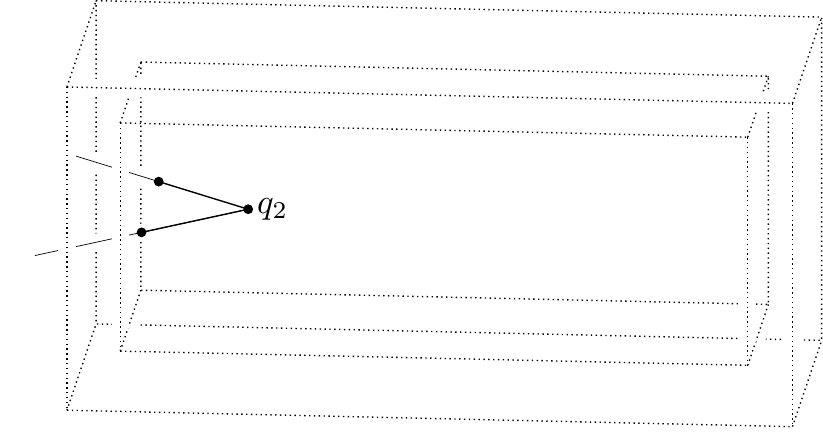}
    \caption{$V_{k+.5}$ and $V_{k+1}$}
    \label{fig:v1.5-and-v2}
  \end{figure}
  We can parameterize every point $p \in V_{k+.5}$ in terms of a
  piecewise linear function as detailed below. The construction is a
  bit unergonomic to formalize explicitly, but it is meant to capture
  the ideas of \cref{fig:vs-with-lines}.
  \begin{figure}[H]
    \centering
    \includegraphics[scale=1.25]{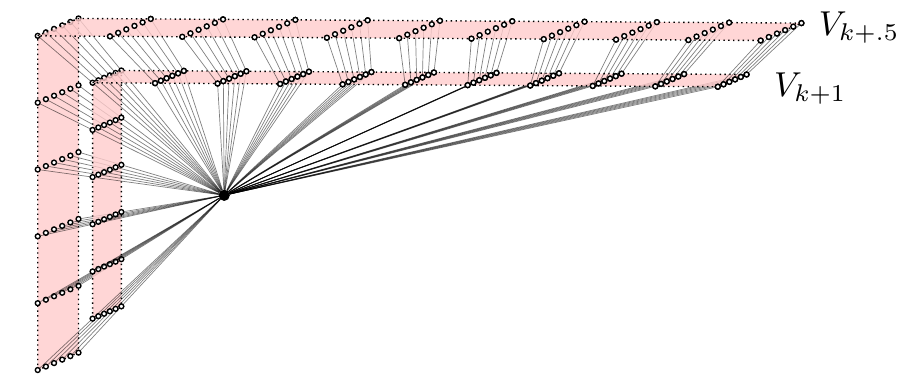}
    \caption{The piecewise-linear parameterization, with some of the
      $v_{k+.5}$'s (outer points; defined below) and $v_{k+1}$'s
      (inner points; also defined below) shown in white.}
    \label{fig:vs-with-lines}
  \end{figure}
  We construct it in two parts and then scale them by half and glue
  them together.
  \begin{enumerate}
    \item If $p \in V_{k+.5} \setminus V_{k+1}$, there exist unique
      points $v_{k+.5} \in \partial {V_{k+.5}}$ and $v_{k+1} \in
      \partial {V_{k+1}}$ such that $v_{k+.5}$ is the point in
      $V_{k+.5}$ ``corresponding'' to $v_{k+1}$ in $V_{k+1}$, and $p$
      is on the line segment $\ol{v_{k+.5} v_{k+1}}$.\footnote{By
      ``corresponds,'' we mean that given a linear function that
      scales up $V_{k+1}$ to yield $V_{k+.5}$, $v_{k+1}$ gets mapped
      to $v_{k+.5}$.} Thus there exists a unique $s \in [0,1]$ such
      that we can write $p$ as a convex combination
      \[
      p = s \cdot v_{k+.5} + (1-s) \cdot v_{k+1}.
      \]
    \item If $p \in V_{k+1}$, there exists a unique point $v_{k+1}
      \in\partial {V_{k+1}}$ such that $p$ is on the line segment
      $\ol{q_k v_{k+1}}$. Analogously to the above, there exists a
      unique $s \in [0,1]$ such that we can write $p$ as
      \[
      p = s \cdot v_{k+1} + (1-s) \cdot q_k.
      \]
  \end{enumerate}
  We re-scale $s$ to glue these two parameterizations into a single
  function which we call $\msf H'_{k+.5}$:
  \[
    \msf H'_{k+.5}(s, v_{k+.5}, v_{k+1}) =
    \begin{cases}
      \qquad \ \, 2s \cdot v_{k+1} \, + (1-2s) \cdot q_k & s \in
      \bk{0, \frac{1}{2}} \\
      (2s-1) \cdot v_{k+.5} + (2-2s) \cdot v_{k+1} & s \in
      \pb{\frac{1}{2}, 1}.
    \end{cases}
  \]
  We'll now do something a bit bizarre and rewrite the parameters in
  $\msf H'_{k+.5}$ as functions of $p$. Note that with the re-scaling
  of $s$, we now have $s$ uniquely determined by $p$. Also recall that
  by construction, $v_{k+.5}$ and $v_{k+1}$ are each uniquely
  determined by $p$. Hence, we can think of $s$, $v_{k+.5}$, $v_{k+1}$
  as being functions of $p$. One can show that these are all
  continuous. As such, we can indeed think of $\msf H'_{k+.5}$ as just
  being a complicated way of writing the identity function on
  $V_{k+.5}$.

  To turn $\msf H'_{k+.5}$ into our ambient isotopy $\msf H_{k+.5}$,
  we now introduce time dependence in $s$. Define $s_{c_0} =
  \frac{c}{2}$ 
  and $s_{c_1} = \frac{1}{2}$, and observe $s_{c_0} <
  s_{c_1}$.\footnote{The reason that $c$ appears in this expression is
    because we're trying to get \cref{eq:h1.5-bound} out in the end.}
  Define $s_c \colon [0,1] \to [s_{c_0}, s_{c_1}]$ by
  \[
    s_c(t) = t \cdot s_{c_1} + (1-t) \cdot s_{c_0},
  \]
  and use this to define
  \[
    s'(t, p) =
    \begin{cases}
      \pn{\frac{s(p)}{s_{c_0}}} \cdot s_c(t) & \text{if } s(p) \in
      \bk{0, s_{c_0}} \\
      \pn{\frac{s(p) - s_{c_0}}{1 - s_{c_0}}} \cdot 1 + \pn{1 -
        \pn{\frac{s(p) - s_{c_0}}{1 - s_{c_0}}}} \cdot s_c(t) &
      \text{if } s(p) \in \pb{s_{c_0}, 1}
    \end{cases}
  \]
  This looks unpleasant but the idea is simple. First, recall that
  $s(p)$ represents a parameter in $[0,1]$ that tells us how to write
  $p$ as a convex combination of other points. One can verify that
  when $t = 0$, $s'(t,p)$ reduces to $s(p)$. Then, as $t$ increases to
  $1$, $s'(t,p)$ distorts the interval represented by $s(p)$ until we
  end up with something like the following, in which $s_{c_0}$ gets
  mapped to where $s_{c_1}$ was initially:
  \begin{figure}[H]
    \centering
    \includegraphics[scale=1.5]{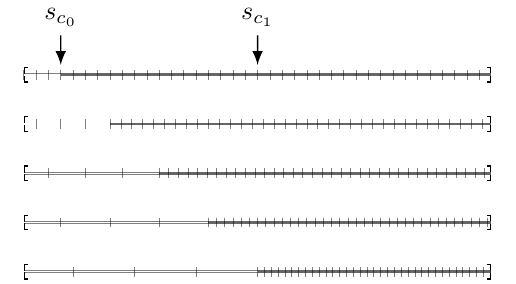}
    \caption{The interval $[0,1]$ represented by $s'(t,p)$ as $t$ goes
      from $0$ to $1$. The light portion represents the values where
      $s(p) \in [0, s_{c_0}]$ and the dark portion represents $s(p)
      \in [s_{c_0}, 1]$.}
  \end{figure}
  The net effect of $\msf H_{k+.5}$ is to take a diagram like the
  following
  \begin{figure}[H]
    \centering
    \includegraphics{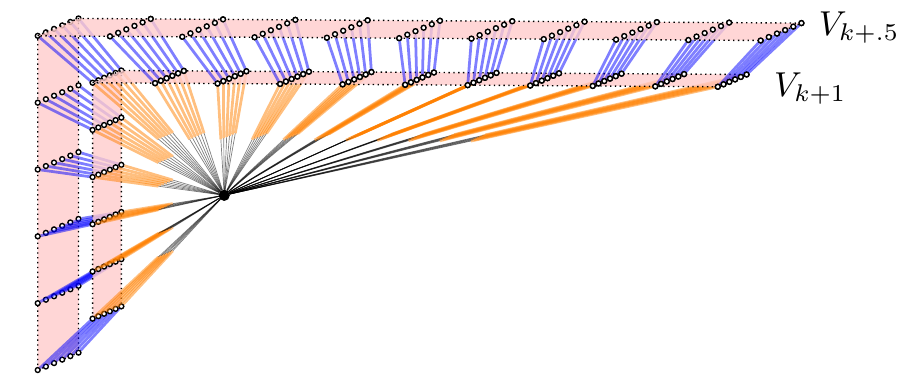}
  \end{figure}
  and turn it into
  \begin{figure}[H]
    \centering
    \includegraphics{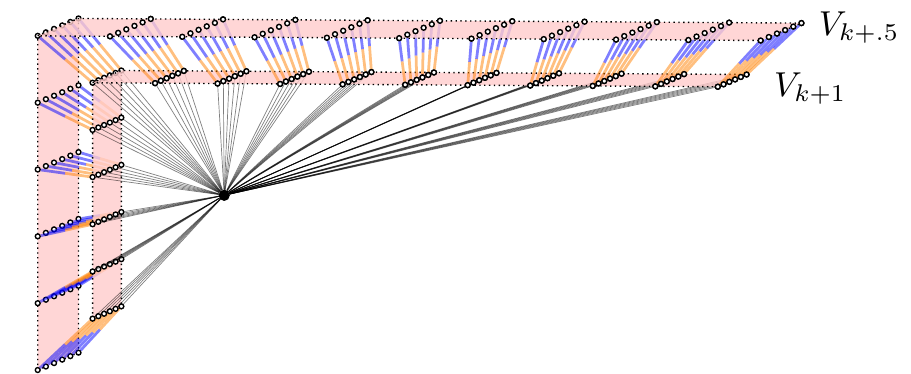}
    \caption{The effects of $\msf H_{k+.5}$.}
  \end{figure}
  Finally, we have the following:
  \begin{adjustwidth}{1em}{}\vspace{.5em}
    \begin{leftbar}
      \textbf{Claim:} $\msf H_{k+.5} \colon [0,1] \times \RR^3 \to \RR^3$
      given by
      \[
        \msf H_{k+.5}(t, p) =
        \begin{cases}
          \msf H'_{k+.5}(s'(t, p), p) & p \in V_{k+.5} \\
          p & p \not \in V_{k+.5}
        \end{cases}
      \]
      satisfies the desired properties of $\msf H_{k+.5}$.

      \textbf{Proof of Claim:} One can verify that $\msf H_{k+.5}$
      satisfies all the properties of an ambient
      isotopy.\footnote{Intuitively, all it is doing is sliding all the
        points in $V_{k+1}$ along the lines in \cref{fig:vs-with-lines}
        until they are \emph{either} in $V_{k+.5} \setminus V_{k+1}$
        \emph{or} $\frac{1}{c}$ times as far from $q_k$ as they were at
        the start.} It remains to show that $\msf h_{k+.5} = \msf
      H_{k+.5}(1, \cdot)$ satisfies the conditions stipulated near
      \cref{eq:h1.5-bound}.

      Let $p \in V_{k+.5}$ be arbitrary. We have two cases.
      \begin{enumerate}
        \item Suppose $s(p) \in \pb{s_{c_0}, 1}$. Then $\msf h_{k+.5}(p)
          \in V_{k+.5} \setminus V_{k+1}$, and hence $\msf h_{k+1}(\msf
          h_{k+.5}(p)) \in V_{k} \setminus V_{k+1}$. \cmark
        \item Suppose $s(p) \in \bk{0, s_{c_0}}$. One can verify that in
          this case, $\msf H_{k+.5}(t, p)$ only slides $p$ along a ray
          segment originating from $q_k$, with the sliding dictated by
          $s'(t,p)$. Hence
          \begin{align*}
            \frac{d(p, q_k)}{d(h_{k+.5}(p),\ \msf h_{k+.5}(q_k))}
            &= \frac{d(\msf H_{k+.5}(0, p), H_{k+.5}(0,
              q_k))}{d(H_{k+.5}(1, p), H_{k+.5}(1, q_k))} \\
            &= \frac{s'(0, p)}{s'(1, p)} \\
            &= \frac{\cancel{\frac{s(p)}{s_{c_0}}}
              s_{c_0}}{\cancel{\frac{s(p)}{s_{c_0}}}
              s_{c_1}} \\
            &= c.
          \end{align*}
          Simplifying gives us
          \[
          \frac{1}{c} \cdot d(p, q_k) = d(H_{k+.5}(1, p),\ H_{k+.5}(1,
          q_k)),
          \]
          as desired.
      \end{enumerate}
    \end{leftbar}
  \end{adjustwidth}
  \textbf{Guaranteeing Bijectivity:} Observe that for all $k\in\NN$,
  for all $p \in V_{k+1}$, we have
  \begin{align}
    d(\msf{h}_{k+1}(\msf h_{k+.5}(p)),\ \msf{h}_{k+1}(\msf
    h_{k+.5}(q_{k})))
    &\geq c \cdot d(\msf h_{k+.5} (p),\ \msf h_{k+.5}(q_{k}))
      \nonumber\\
    &\geq c \cdot \frac{1}{c} \cdot d(p, q_{k}) \nonumber\\
    &\geq d(p, q_{k}). \label{eq:final-bound}
  \end{align}
  For each $n \in \NN$, let $\ms h_n$ denote the composition of all
  these homeomorphisms:
  \begin{align*}
    \ms h_n
    &= \pn{\comp_{k=1}^{n-1} \pn{\msf{h}_{k+1} \circ \msf h_{k+.5}}} \circ
      \msf h_1 \\
    &= (\msf h_n \circ \msf h_{n-.5} \circ \cdots \circ \msf h_2 \circ
      \msf h_{1.5} \circ \msf h_1).
  \end{align*}
  Note that the sequence of points $\ms h_n^{-1}(q_n)$ is constant,
  hence the limit $\lim_{n\to\infty} \ms h_n^{-1}(q_{n})$ exists; in
  particular, it is $y_\infty$. For all $y \in \RR^3 \setminus
  \set{y_\infty}$, \cref{eq:final-bound} shows that at each step, $y$
  is sent no closer to $q_{k+1}$ than it was to $q_k$. Since the boxes
  are shrinking it follows that each such $y$ will eventually leave
  the boxes and thus remain fixed at subsequent steps. Explicitly: If
  $n_0$ satisfies
  \[
    \frac{(6+2\varepsilon)\ell}{2^{n_0}} < d(y, y_\infty),
  \]
  Then for all $n > n_0$ we have $\ms h_{n}(y) \not \in V_n$, and
  hence
  \[
    \ms h_n(y) = \ms h_{n_0}(y).
  \]
  This implies $\ms h_\infty$ is a bijection between $\RR^3 \setminus
  \set{y_\infty}$ and $\RR^3 \setminus \set{\ms h_\infty(y_\infty)}$.
  So \cref{thm:vks-ambient-homeomorphism} implies $\ms h_\infty$ is a
  homeomorphism between $\RR^3 \setminus \set{y_\infty}$ and $\RR^3
  \setminus \set{\ms h_\infty(y_\infty)}$. Thus $\ms h_\infty$ is
  bijective on $\RR^3$, and \cref{thm:vks-ambient-isotopy} implies
  that $\ms H_\infty(1, \cdot)$ is an ambient isotopy.
\end{proof}
Finally, we have the following famous example.
\begin{proposition}\label{prop:countable-csum-of-trefoils}
  The following curve is a tame arc.
  \begin{figure}[H]
    \centering
    \includegraphics[scale=.5]{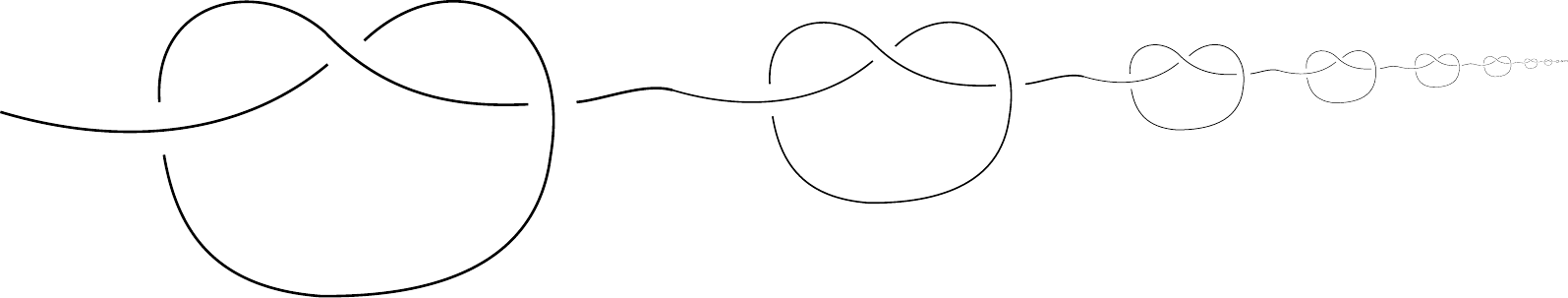}
  \end{figure}
\end{proposition}
\begin{proof}[Sketch]
  We apply \cref{lem:disjoint-vks}. Consider a sequence of properly
  nested boxes $V_1$, $V_2$, $\ldots$, as follows:
  \begin{figure}[H]
    \centering
    \includegraphics[scale=.5]{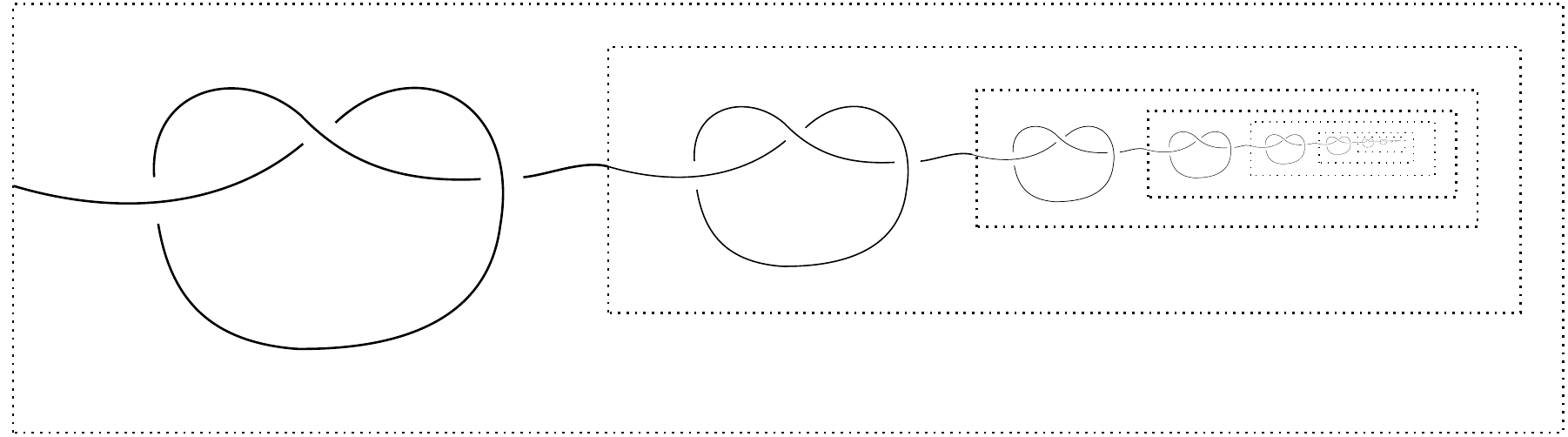}
    \caption{A countable connected sum of trefoils.}
    \label{fig:countable-csum-of-trefoils}
  \end{figure}
  For all $k \in \NN$, define $V'_k$ by $V'_k = V_k \setminus
  V_{k+1}$. Note the $V'_k$ are disjoint. We can define the ambient
  isotopies $H_k$ such that $H_k$ performs the following modification
  on $V'_k$:
  \begin{figure}[H]
    \centering
    \includegraphics[scale=.5]{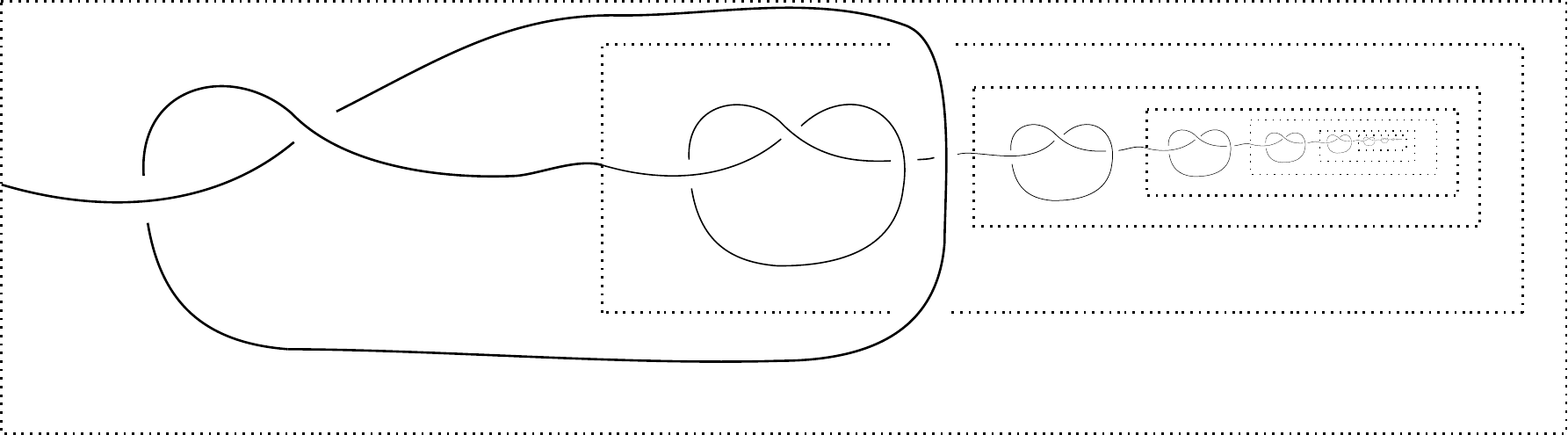}
  \end{figure}
  \begin{figure}[H]
    \centering
    \includegraphics[scale=.5]{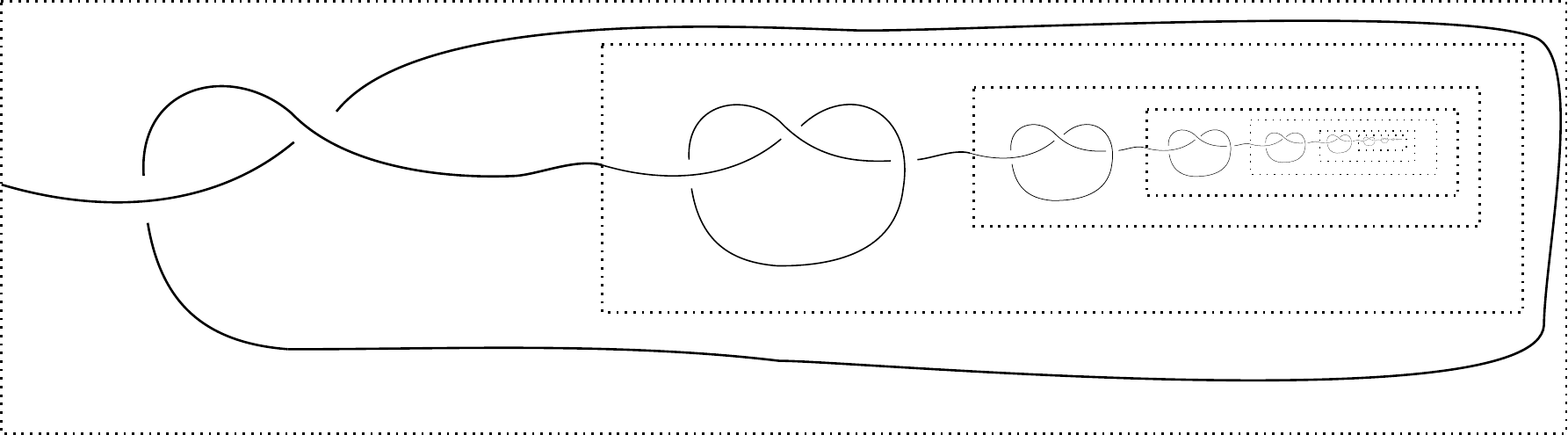}
  \end{figure}
  \begin{figure}[H]
    \centering
    \includegraphics[scale=.5]{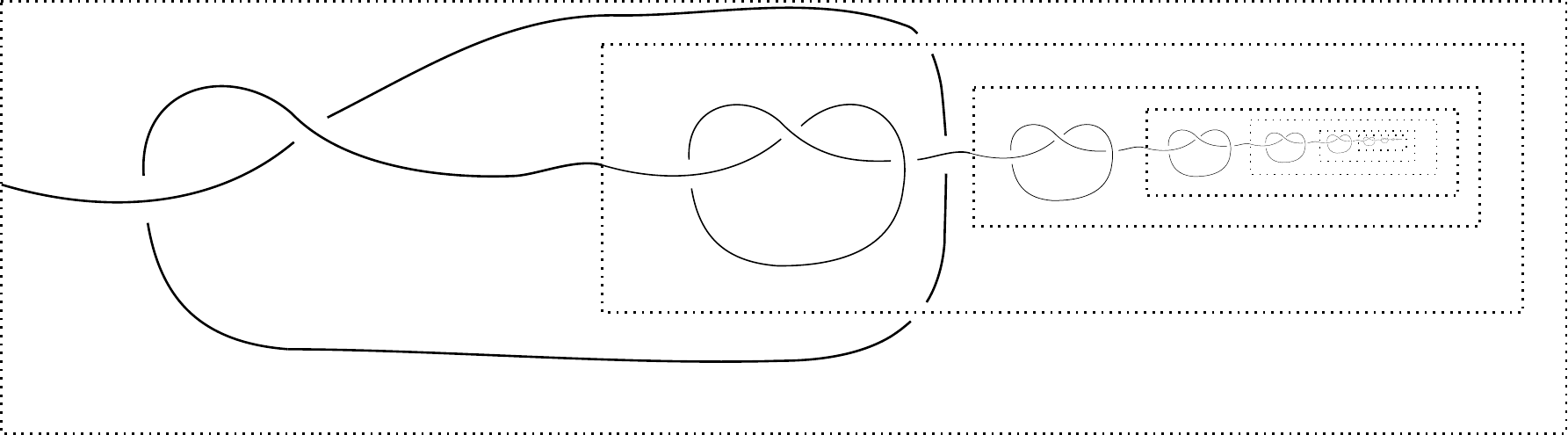}
  \end{figure}
  \begin{figure}[H]
    \centering
    \includegraphics[scale=.5]{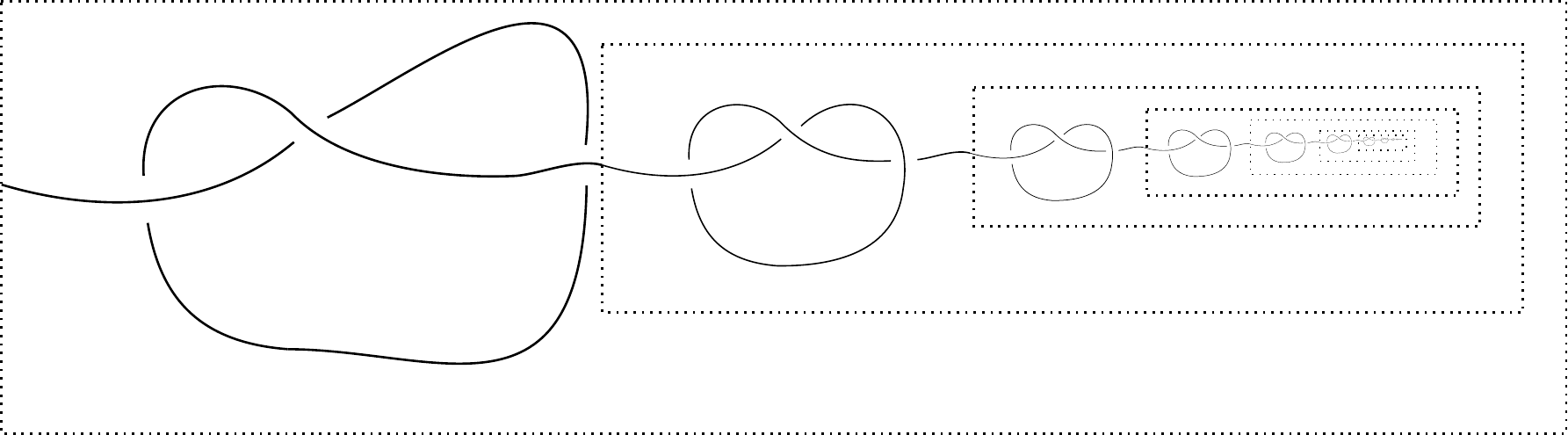}
  \end{figure}
  \begin{figure}[H]
    \centering
    \includegraphics[scale=.5]{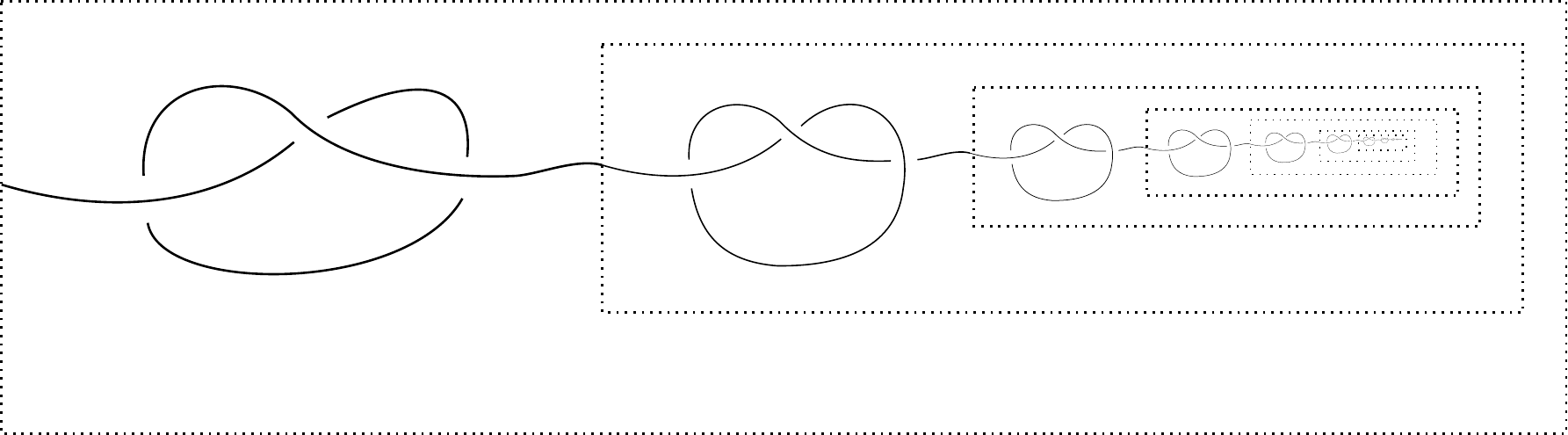}
  \end{figure}
  \begin{figure}[H]
    \centering
    \includegraphics[scale=.5]{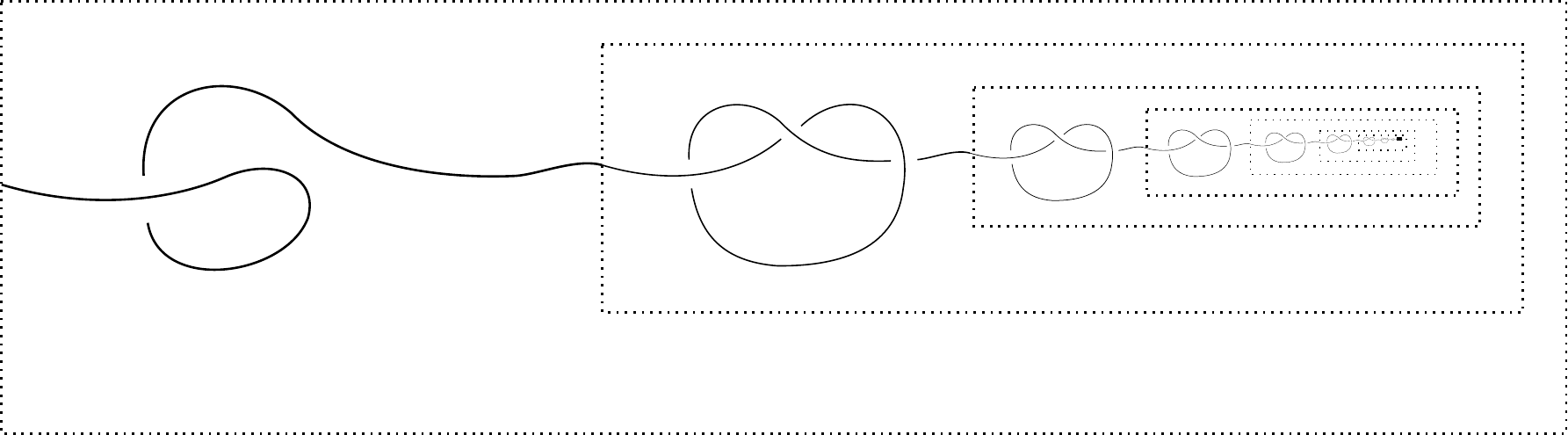}
  \end{figure}
  \begin{figure}[H]
    \centering
    \includegraphics[scale=.5]{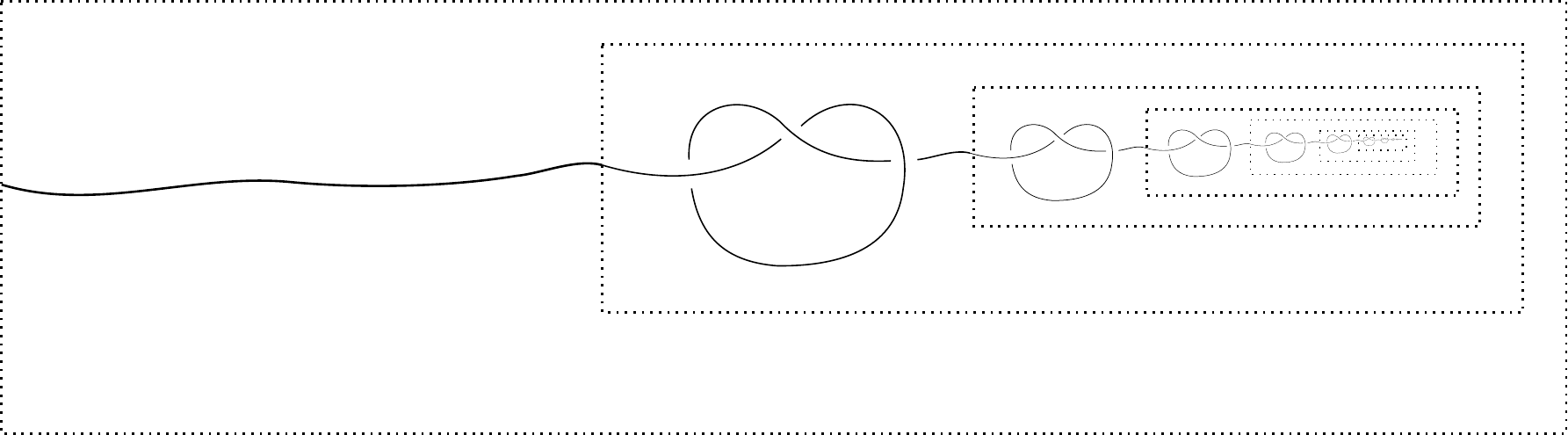}
  \end{figure}
  Taking the limit, we obtain an ambient isotopy unknotting the arc.
\end{proof}
Having established the versatility of \cref{thm:vks-ambient-isotopy},
we now discuss situations in which it cannot be applied. In a sense,
we will see that each of the hypotheses of the theorem are sharp.

\section{Cases Where \cref{thm:vks-ambient-isotopy} Does Not
  Apply}\label{sec:does-not-apply}

\begin{example}
  If we extend the right hand side of
  \cref{fig:countable-csum-of-trefoils} with a straight line segment,
  then we cannot apply \cref{thm:vks-ambient-isotopy}. In fact, the
  curve is wild --- see \cite{Daverman}, Exercise 2.8.4.
  \begin{figure}[H]
    \centering
    \includegraphics[scale=.5]{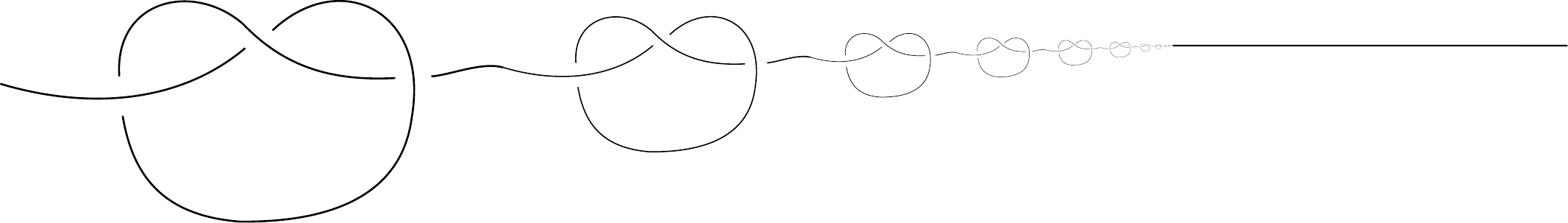}
  \end{figure}
  What breaks here is that if we try to apply the same argument as we
  did with the non-extended version in
  \cref{prop:countable-csum-of-trefoils}, we can't force
  $\lim_{n\to\infty} \diam\pn{\bigcup_{k=n}^\infty V_k} = 0$. In
  particular, $\diam\pn{\bigcup_{k=n}^\infty V_k}$ is bounded below by
  the length of the straight line segment.
\end{example}
And now, as promised, we discuss the curve from
\cref{subfig:remarkable-curve}.
\begin{example}\label{ex:remarkable-curve-redux}
  One cannot apply \cref{thm:vks-ambient-isotopy} to the following
  curve:
  \begin{figure}[H]
    \centering
    \includegraphics[scale=1.5]{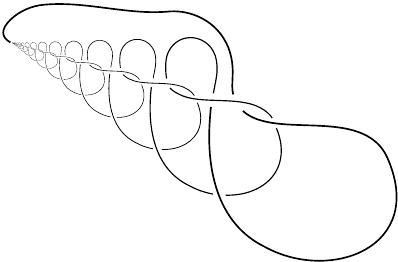}
    \caption{Fox's ``Remarkable Knotted Curve.''}
    \label{fig:remarkable-curve-redux}
  \end{figure}
  Here, the $V_k$'s are not the problem; rather it is bijectivity on
  the ambient space. Consider a ``line'' of points in the ambient
  space passing through the first loop:
  \newcommand{\figscale}{8}
  \begin{figure}[H]
    \centering
    \includegraphics[scale=\figscale]{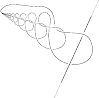}
    \caption{The curve, now with an imaginary ``line'' of points from
      the ambient space.}
    \label{fig:remarkable-curve-with-line}
  \end{figure}
  As we remove the first loop, some points on the ``line'' get pulled
  through:
  \begin{figure}[H]
    \centering
    \includegraphics[scale=\figscale]{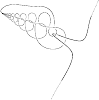}
  \end{figure}
  \begin{figure}[H]
    \centering
    \includegraphics[scale=\figscale]{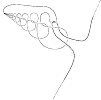}
    \caption{Removing the first loop. Note, by continuity, the
      ``line'' must remain unbroken.}
  \end{figure}
  As we remove the second loop, a similar process occurs:
  \begin{figure}[H]
    \centering
    \includegraphics[scale=\figscale]{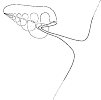}
  \end{figure}
  \begin{figure}[H]
    \centering
    \includegraphics[scale=\figscale]{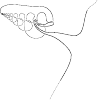}
    \caption{Removing the second loop.}
  \end{figure}
  As $n \to \infty$, the stitching process continues, with the number
  of lines doubling at each iteration. No matter what we try, in the
  limit, a countable subset of the original line gets mapped to the
  wild point. In fact, if we thicken the line in
  \cref{fig:remarkable-curve-with-line} to a cylinder, we see that
  \emph{uncountably}-many points are lost in the limit!

  This is reflected in the fact that if we were to try something like
  the approach taken in \cref{prop:countable-r1-2}, we would not be
  able to define ambient isotopies that do the jobs of the $\msf H_{k
    + .5}$'s.
\end{example}
\begin{remark}\label{rem:cant-tie-it}
  Another perspective one might consider here is that there is no
  obvious way to ``tie'' the curve in
  \cref{fig:remarkable-curve-redux} if starting with just an unknotted
  line. Indeed, there's no ``first loop'' to insert --- to tie one, we
  require infinitely-many of the others to be present already!
\end{remark}

\section{Discussion}
We conclude with a discussion of directions for future work.

\cref{thm:vks-ambient-homeomorphism,thm:vks-ambient-isotopy} give us
one direction to a loose ``countable analogue'' of Reidemeister's
theorem. The restrictions on the $V_k$'s have a nice diagrammatic
interpretation --- ``the total region we're going modify has to shrink
in the limit'' --- but so far, a similarly-concise description of the
bijectivity requirement has eluded the author.

Qualitatively, it seems that problems tend to occur when the set of
{points that get moved infinitely-many times} is not topologically
discrete; however, it's been difficult to find the right language to
distinguish between cases when this gives rise to \emph{legitimate}
problems versus ones where the issue is superficial. As an example of
the former, consider \cref{ex:remarkable-curve-redux}, and as an
example of the latter, consider a sequence of homeomorphisms $h_k :
\RR^3 \to \RR^3$ where each $h_k$ is defined by
\[
  h_k(y) = y +
  \begin{bmatrix}
    \frac{1}{2^k} \\[.5em]
    0 \\
    0
  \end{bmatrix}.
\]
Then
\[
  h_\infty(y) = y +
  \begin{bmatrix}
    1 \\
    0 \\
    0
  \end{bmatrix},
\]
and so \emph{all} points in $\RR^3$ are moved infinitely-many times by
$h_\infty$, yet we have no problems.

Thus, we have the following question:
\begin{adjustwidth}{1em}{}\vspace{.5em}
  \begin{leftbar}\vspace{-.5em}
  \begin{question}
    Is there a simpler way to guarantee bijectivity of the limit
    function in \cref{thm:vks-ambient-isotopy}? In particular, is there
    a purely diagrammatic condition?
  \end{question}
\end{leftbar}
\end{adjustwidth}

One of the things that makes the problem in
\cref{ex:remarkable-curve-redux} tricky to spot at first is that the
limiting process yields an \emph{isotopy}, just not an \emph{ambient
  isotopy} (this is reminiscent of \emph{Bachelor's unknotting}). We
wonder whether a similar effect can be obtained using only
Reidemeister I or Reidemeister III moves, as detailed in the following
questions:
\begin{adjustwidth}{1em}{}\vspace{.5em} 
\begin{leftbar}\vspace{-.5em}
\begin{question}\label{q:what-about-r1}
  Does there exist a knot $f \colon S^1 \into \RR^3$ such that applying a
  countable sequence of Reidemeister I moves to $f$ yields an isotopy,
  but not an ambient isotopy?
\end{question}
\begin{question}
  Same as \cref{q:what-about-r1}, but with Reidemeister III moves
  instead of Reidemeister I moves.
\end{question}
\end{leftbar}
\end{adjustwidth}
Of course, we want to avoid trivial examples like taking $H_1$ to be
Bachelor's unknotting and then taking the remaining $H_k$'s to be
identity.

Now, returning to the question of a countable analogue for
Reidemeister's theorem:
\begin{question}
  If we restrict ourselves to Reidemeister moves, when do the
  converses of
  \cref{thm:vks-ambient-homeomorphism,thm:vks-ambient-isotopy} hold?
  That is, given an ambient isotopy between two embeddings $f_0, f_1 :
  S^1 \into \RR^3$, when can we guarantee the existence of the desired
  $V_k$'s and $h_k$'s, where each of the $h_k$'s represent single
  Reidemeister moves?
\end{question}
We have a conjecture in this direction. In \cite{KobayashiThesis} the
author employed \cref{thm:vks-ambient-isotopy} to argue the following
result:
\begin{theorem}
  Call a knot diagram a \textbf{discrete diagram} if it satisfies all
  the axioms of a regular diagram except perhaps having finitely-many
  crossings.\footnote{The ``discrete'' in ``discrete diagram'' comes
    from the fact that the set of crossing points only needs to be
    topologically discrete rather than finite.}

  Now, let $f \colon S^1 \into \RR^3$ be an arbitrary knot. Then if $f$
  admits a discrete diagram, $f$ is ambient isotopic to a
  representative comprised of countably-many line segments.
\end{theorem}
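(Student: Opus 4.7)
The plan is to apply \cref{thm:vks-ambient-isotopy} to a sequence of PL ambient isotopies $H_k$, each of which straightens a single crossing (plus one neighboring arc) of the discrete diagram, so that $\ms H_\infty(1,\cdot) \circ f$ is a polygonal embedding composed of countably-many line segments. By definition of a discrete diagram the crossings $(c_k)_{k \in \NN}$ form a topologically discrete set of transverse double points whose accumulation set $A$ (thought of as a subset of $\RR^3$ along $f(S^1)$) is closed and disjoint from every $c_k$.

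First I would pick pairwise disjoint closed boxes $B_k \subset \RR^3$ with $c_k \in B_k^\circ$, containing no other crossings, disjoint from $A$, and with $\diam B_k \to 0$; this is possible because each $c_k$ is isolated among crossings and at positive distance from $A$. After enumerating the ``connecting arcs''---the subarcs of $f(S^1)$ lying between consecutive $B_k$'s along $S^1$---I would set $V_k = B_k \cup T_k$, where $T_k$ is a thin tubular neighborhood of the $k$-th connecting arc, chosen narrow enough that the $V_k$'s are pairwise disjoint, disjoint from $A$, and satisfy $\lim_{n\to\infty} \diam\pn{\bigcup_{k\geq n} V_k} = 0$. Since $f(S^1)$ is compact, condition (\ref{cond:vk-compactly-contained}) of \cref{thm:vks-ambient-homeomorphism} is automatic. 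Inside each $V_k$ the relevant piece of $f(S^1)$ is a tame arc containing a single transverse double point, so a standard finite-crossing PL approximation result produces a PL ambient isotopy $H_k$ supported in $V_k$ that replaces this piece with a finite polygonal path while fixing everything outside $V_k$.

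Bijectivity of $\ms H_\infty(1, \cdot)$ is the subtle hypothesis, as \cref{ex:remarkable-curve-redux} warns, but this setup pays for it cheaply via \cref{lem:disjoint-vks}: the $V_k$'s are pairwise disjoint. Independently, every point of $A$ is fixed by each $H_k$ (since $V_k \cap A = \emptyset$) and hence by $\ms h_\infty$, while any $y \in \RR^3 \setminus A$ lies outside $V_k$ for all but finitely many $k$ (the $V_k$'s cluster into $A$ with shrinking diameter), so $\ms h_\infty(y)$ is a finite composition of PL homeomorphisms. \cref{thm:vks-ambient-isotopy} then yields the desired ambient isotopy, whose time-$1$ image of $f$ is a countable union of line segments.

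The hard part will be geometric bookkeeping: choosing the tubes $T_k$ so they remain pairwise disjoint and disjoint from $A$ even as the connecting arcs accumulate into $A$, and matching the PL straightening produced by $H_k$ with that of $H_{k\pm 1}$ at the shared endpoints on $\partial B_k$. Near $A$ one must take $\diam T_k$ smaller than the distance from $T_k$ to $A$, and build the finite PL approximations inside adjacent $V_k$'s in a compatible way (e.g., fixing a common PL triangulation of each $\partial B_k$ and extending PL radially into the two meeting tubes). Once this is set up the limit knot is PL with countably-many segments, as required.
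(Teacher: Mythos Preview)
The paper does not actually contain a proof of this theorem; it only records the statement and notes that the argument (carried out in \cite{KobayashiThesis}) employs \cref{thm:vks-ambient-isotopy}. So there is no detailed argument here to compare against beyond that single remark, and at that level your plan agrees with the paper. That said, your proposal has a genuine gap.

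Your setup requires $\lim_{n\to\infty}\diam\bigl(\bigcup_{k\ge n} V_k\bigr)=0$, but the hypothesis ``discrete diagram'' does not force the crossings to accumulate at a single point. If the accumulation set $A$ contains two distinct points $p$ and $q$, then for every $n$ there remain infinitely many crossings---hence infinitely many $V_k$'s---near each of $p$ and $q$, so $\diam\bigl(\bigcup_{k\ge n}V_k\bigr)\ge d(p,q)$ for all $n$, no matter how thin you make the individual boxes and tubes. The connecting arcs make this worse: one can arrange crossings near $p$ and near $q$ to alternate along $S^1$, so that infinitely many connecting arcs run from a neighborhood of $p$ to a neighborhood of $q$; then $\diam T_k$ does not even tend to $0$. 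In short, condition~(\ref{cond:vk-vanish}) of \cref{thm:vks-ambient-homeomorphism} forces the tail of the $V_k$'s into a single shrinking ball, and a general discrete diagram need not have that structure. You have in effect conflated $\diam V_k\to 0$ with $\diam\bigl(\bigcup_{k\ge n}V_k\bigr)\to 0$; \cref{rem:harmonic-series-BAD} is precisely the warning against this.

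A repair would have to treat $A$ with more care---for instance, first straightening the finitely many crossings and arcs lying outside a fixed small neighborhood of $A$ by an ordinary tame argument, and then invoking \cref{thm:vks-ambient-isotopy} locally near each isolated point of $A$, iterating through whatever hierarchy $A$ carries. The PL-matching bookkeeping you flag at the end is real but secondary; the diameter condition is the step that fails as written.
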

This gave rise to the following conjecture (we thank Kye Shi for the
insight of adding the fourth move):
\begin{conjecture}
  Define the \emph{extended} Reidemeister moves to be the standard
  move set together with a fourth move
  \begin{figure}[H]
    \centering
    \includegraphics{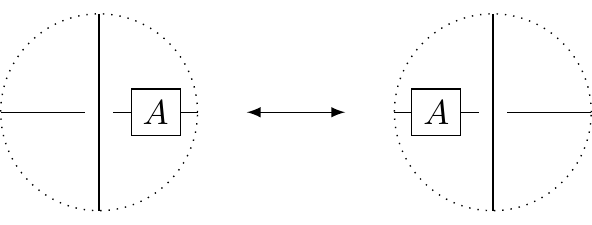}
    \caption{Fourth move}
  \end{figure}
  where in the above, $A$ is a compact set whose interior remains
  fixed relative to its boundary. Note, $A$ can contain wild points.

  Let $f_0, f_1 \colon S^1 \into \RR^3$ be knots that admit discrete
  diagrams. Suppose $f_0 \cong f_1$. Then there exists a countable
  sequence of extended Reidemeister moves satisfying the hypotheses of
  \cref{thm:vks-ambient-isotopy} and taking $f_0$ to $f_1$.
\end{conjecture}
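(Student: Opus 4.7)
The plan is to argue by a shrinking-region recursion that directly feeds into Theorem~\ref{thm:vks-ambient-isotopy}, using the fourth move to quarantine the wild structure at each stage so that only classical (finite) Reidemeister theory needs to be invoked locally.

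First I would invoke the theorem cited just above the conjecture to replace $f_0$ and $f_1$ by ambient-isotopic representatives $\tilde f_0$, $\tilde f_1$ comprised of countably-many line segments; these two preliminary isotopies can be packaged as applications of the fourth move with trivially chosen $A$. Because each diagram is discrete, the set of crossings has only topologically isolated accumulation points, so the set $W_i$ of wild points of $\tilde f_i$ is closed, and $\tilde f_i$ is locally polygonal away from $W_i$. The hypothesis $f_0 \cong f_1$ gives an ambient homeomorphism $h$; since ambient homeomorphisms carry wild points to wild points, $h$ carries $W_0$ bijectively onto $W_1$.

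Next I would choose nested compact neighborhoods $A_1 \supsetneq A_2 \supsetneq \cdots$ of $W_0$ (in $\RR^3$) with $\bigcap_k A_k = W_0$ and $\diam(A_k \setminus A_{k+1}) \to 0$, and correspondingly $B_k$ around $W_1$ so that $h(A_k) = B_k$. In the annular region $A_{k-1} \setminus A_k^\circ$, both $\tilde f_0$ and the image of $\tilde f_0$ under the partially-completed isotopy have only finitely-many crossings, and similarly for $\tilde f_1$; here I would apply a finite sequence of standard Reidemeister I/II/III moves (supported in a $V_k \subseteq A_{k-1} \setminus A_k^\circ$) to reconfigure the tame annular tangle to match $\tilde f_1$'s, and then use one application of the fourth move with $A = A_k$ to slide the still-wild lump $\tilde f_0(A_k)$ into the correct position relative to $\tilde f_1$. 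The $V_k$ built this way automatically satisfy condition (\ref{cond:vk-vanish}) because $V_k \subseteq A_{k-1}$ and $\diam(A_{k-1}) \to 0$, and condition (\ref{cond:vk-compactly-contained}) because everything happens in the compact $A_1$; bijectivity in the limit follows because each move individually is bijective and the only point whose $h_k$-orbit is infinite is the single wild point in each "strand" limit (exactly as handled in Proposition~\ref{prop:countable-r1-2}).

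The hard part will be making the annular step rigorous: I need that the tame tangle in $A_{k-1} \setminus A_k^\circ$ inherited from $\tilde f_0$ (after all earlier moves) is related to that of $\tilde f_1$ by a Reidemeister sequence with \emph{prescribed} boundary behavior on $\partial A_{k-1}$ and $\partial A_k$. Classical Reidemeister theorems apply to closed knots; here I would need a version for tangles with fixed boundary points, which is known but requires that the endpoints on $\partial A_k$ can be matched up. This forces a careful choice of the $A_k$ — probably as preimages under the polyhedral approximation of $h$ of standard shrinking cubes around $W_1$ — so that $h$ restricted to each annular region is itself approximable by a finite composition of Reidemeister moves plus a controlled boundary matching. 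A secondary subtlety is that $W_0$ may have accumulation structure of its own (it need not be discrete as a subset of $\RR^3$), in which case one may have to iterate the recursion transfinitely or replace the fourth move's $A$ by a more intricate compact set; resolving this is where I expect the conjecture's real content to lie.
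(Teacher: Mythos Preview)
The statement you are attempting to prove is stated in the paper as a \emph{Conjecture}, not a theorem; the paper offers no proof of it. Immediately after the statement the author writes that there are ``some partial results in this direction but there remain important gaps,'' and refers the reader to \S9.3.1 of the thesis for the proposed approach. So there is no paper proof to compare against.

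Your sketch is in the same spirit as the author's intended approach (recurse on shrinking regions, quarantine wildness via the fourth move, invoke classical Reidemeister theory in the tame annuli, then feed everything into \cref{thm:vks-ambient-isotopy}), and you correctly locate the two obstructions that keep it a conjecture. The first is the tangle-Reidemeister step with prescribed boundary behavior on both $\partial A_{k-1}$ and $\partial A_k$: you need not just that the two annular tangles are equivalent rel boundary, but that the sequence of moves can be chosen so as not to disturb the already-matched region outside $A_{k-1}$ \emph{and} so that the induced permutation of the strand endpoints on $\partial A_k$ agrees with what the next stage expects. The second is more serious: your construction requires $\diam(A_k)\to 0$ in order to satisfy condition (\ref{cond:vk-vanish}), but you also demand $\bigcap_k A_k = W_0$; if $W_0$ contains more than one point these are incompatible, and indeed $W_0$ need not even be finite or discrete. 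Your final paragraph concedes exactly this and suggests a transfinite recursion or a fancier choice of $A$, but that is precisely the missing content --- at that point you have restated the conjecture rather than proved it.

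One smaller inaccuracy: from the crossings being topologically discrete it does not follow that their accumulation points are isolated; the derived set of a discrete subset of $S^1$ can itself have accumulation points (indeed can have Cantor--Bendixson rank as high as you like below $\omega_1$), so the structure of $W_i$ may be substantially worse than a finite set of isolated wild points.
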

For more details on the proposed approach, see \cite{KobayashiThesis},
\S 9.3.1. We have some partial results in this direction but there
remain important gaps.

\section{Acknowledgements}
We thank Kye Shi for proofreading and for offering many helpful
suggestions on how to improve the exposition. We also thank Francis Su
and Sam Nelson for the many insights they provided when advising the
author's undergraduate thesis \cite{KobayashiThesis}, on which this
work is based.

\bibliographystyle{amsplain}
\bibliography{uniform-convergence-and-knot-equivalence}

\appendix

\section{The Fox-Artin Tameness Invariant}\label{sec:tameness-invariant}
We will now describe the invariant for tameness established by Fox and
Artin.
\begin{theorem}[Fox-Artin]\label{thm:fox-artin-invariant}
  Let $f \colon [0,1] \into \RR^3$ be a tame arc. Let $p = f(0)$, and for
  all $k \in \NN$, let $V_k \subseteq \RR^3$ be a closed set such that
  $p \in V_k^\circ$. Suppose that
  \[
    \cdots \subsetneq V_k \subsetneq \cdots \subsetneq V_2 \subsetneq
    V_1,
  \]
  and
  \[
    \set{p} = \bigcap_{n=1}^\infty V_k.
  \]
  Then there exists $n_0 \in \NN$ such that the inclusion map
  \[
    \iota_* \colon \pi(V_{n_0} \setminus f([0,1])) \into \pi(V_1 \setminus
    f([0,1]))
  \]
  is the trivial homomorphism.
\end{theorem}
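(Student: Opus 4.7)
The plan is to reduce $f$ to a polygonal model using tameness and then exploit the local piecewise-linear geometry of a polygonal arc at its endpoint to exhibit a simply connected region that eventually contains a tail of the $V_k$.

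First I would apply \cref{def:tame-and-wild} to obtain an ambient homeomorphism $h$ of $\RR^3$ (the time-$1$ slice of the witnessing ambient isotopy) with $\msf{f} \coloneqq h \circ f$ polygonal, and set $W_k \coloneqq h(V_k)$. The $W_k$ inherit all the stated hypotheses (closed, strictly nested, with $h(p)$ in the interior and $\bigcap_k W_k = \{h(p)\}$), and $h$ carries the inclusion $V_{n_0} \setminus f([0,1]) \hookrightarrow V_1 \setminus f([0,1])$ onto $W_{n_0} \setminus \msf{f}([0,1]) \hookrightarrow W_1 \setminus \msf{f}([0,1])$, inducing an isomorphism on fundamental groups. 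Hence it suffices to prove the theorem for $\msf{f}$ and the $W_k$.

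Next, because $h(p) = \msf{f}(0)$ is the endpoint of the first edge of $\msf{f}$, I can choose a closed ball $\overline{B}$ centered at $h(p)$ with $\overline{B} \subseteq W_1^\circ$ whose radius is strictly smaller than both the length of that first edge and the distances from $h(p)$ to the finitely many remaining edges, so that $\overline{B} \cap \msf{f}([0,1])$ is exactly one radial segment $\sigma$ running from $h(p)$ to a point $q$ on $\partial \overline{B}$. The crucial observation is that $\overline{B} \setminus \sigma$ is simply connected: radial projection away from $h(p)$ gives a deformation retract of $\overline{B} \setminus \sigma$ onto $\partial \overline{B} \setminus \{q\}$, which is a $2$-sphere with a puncture and hence homeomorphic to $\RR^2$.

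Finally I would find $n_0$ with $W_{n_0} \subseteq \overline{B}$, so that $W_{n_0} \setminus \msf{f}([0,1]) \subseteq \overline{B} \setminus \sigma \subseteq W_1 \setminus \msf{f}([0,1])$; then every loop in $W_{n_0} \setminus \msf{f}([0,1])$ is already null-homotopic in the simply connected middle piece, giving triviality of the inclusion-induced map, and pulling back via $h$ yields the theorem. The main obstacle I expect is this containment step: the listed hypotheses on the $V_k$ do not by themselves rule out unbounded "horn" components disjoint from $f([0,1])$ that never shrink toward $p$, so some implicit compactness assumption (standard in the Fox--Artin setting) or the stratagem of intersecting each $V_k$ with a fixed compact neighborhood of $p$ is needed. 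Once the $W_k$ are compact, nested, with common intersection $\{h(p)\}$, the finite intersection property forces $W_{n_0} \subseteq \overline{B}$ for large enough $n_0$, and the rest of the argument is purely local PL geometry.
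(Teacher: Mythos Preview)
Your argument is essentially the paper's own: reduce via an ambient homeomorphism, sandwich a small ball (with a radial segment removed) between $V_{n_0}$ and $V_1$, and use simple connectivity of that ball-minus-segment to kill $\iota_*$. The only cosmetic difference is that the paper straightens $f$ all the way to a line segment rather than merely to a polygonal arc, so no separate ``first edge'' argument is needed; your flagged compactness issue (that nested closed sets with intersection $\{p\}$ need not have diameters tending to $0$) is genuine, and the paper simply asserts $\diam(V_k)\to 0$ without addressing it, so your caution there is warranted rather than a defect in your proposal.
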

\begin{proof}
  $f$ is tame implies that there exists an ambient homeomorphism $h :
  \RR^3 \to \RR^3$ such that $h \circ f$ is a straight line segment.
  Since homeomorphism preserves the fundamental group (as well as all
  the conditions on the $V_n$), it suffices to prove the claim for a
  straight line.

  Hence, without loss of generality, suppose $f$ is a line segment.
  \begin{figure}[H]
    \centering
    \includegraphics{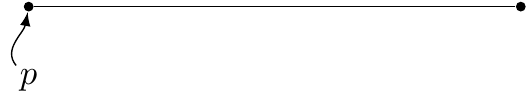}
    \caption{An example of $f([0,1])$, with the choice of $p$ labeled.}
  \end{figure}
  \begin{figure}[H]
    \centering
    \includegraphics[scale=.75]{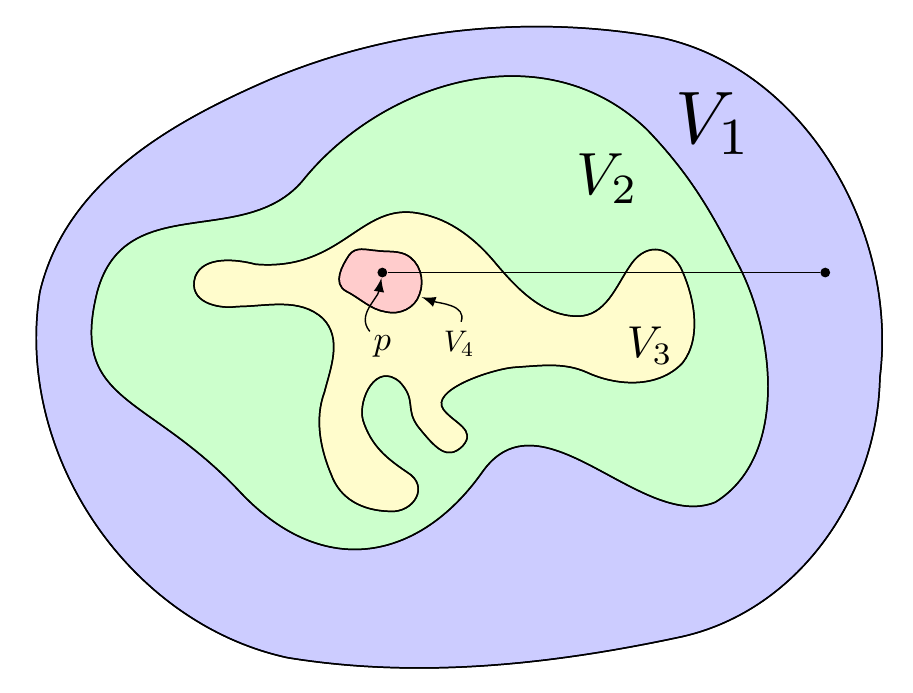}
    \caption{An example of some possible first few $V_n$'s.}
  \end{figure}
  Since $p \in V_1^\circ$, there exists $\varepsilon > 0$ such that
  $B_\varepsilon(p) \subsetneq V_1$. Now, because
  $\bigcap_{n=1}^\infty V_n = \set{p}$, we have $\diam(V_{k}) \to 0$,
  and hence there exists $n_0 \in \NN$ such that $V_{n_0} \subsetneq
  B_\varepsilon(p)$. This gives us the inclusions (of sets)
  \[
    \iota_{0} \colon V_{n_0} \into B_\varepsilon(p)
    \qquad \text{and} \qquad
    \iota_{1} \colon B_\varepsilon(p) \into V_1.
  \]
  Then the inclusion (of sets) $\iota \colon V_{n_0} \into V_1$ is given by
  \[
    \iota = \iota_1 \circ \iota_0.
  \]
  The result is identical if we replace $V_{n_0}$, $B_\varepsilon(p)$,
  and $V_1$ with $V_{n_0} \setminus f([0,1])$, $B_\varepsilon(p)
  \setminus f([0,1])$, and $V_1 \setminus f([0,1])$, respectively.

  Since the inclusion maps are all continuous, they induce
  homomorphisms on the associated fundamental groups. Thus, defining
  \begin{align*}
    (\iota_0)_* &: \pi(V_{n_0} \setminus f([0,1])) \into
                  \pi(B_\varepsilon(p) \setminus f([0,1])) \\
    (\iota_1)_* &: \pi(B_\varepsilon(p) \setminus f([0,1]))
                  \into \pi(V_1 \setminus f([0,1])),
  \end{align*}
  we see that $\iota_* \colon \pi(V_{n_0} \setminus f([0,1])) \into \pi(V_1
  \setminus f([0,1]))$ is given by
  \[
    \iota_* = (\iota_1)_* \circ (\iota_0)_*.
  \]
  $B_\varepsilon(p) \setminus f([0,1])$ is just a ball with a radius
  removed, hence $\pi(B_\varepsilon(p) \setminus f([0,1]))$ is the
  trivial group. It follows that $(\iota_1)_*$ is the trivial
  homomorphism, and thus $\iota_*$ is the trivial homomorphism.
\end{proof}
In the case of the curve in \cref{subfig:remarkable-curve}, one can
find a sequence of $V_k$ such that no such $n_0$ exists. It follows
that the curve is wild.

\end{document}